\numberwithin{equation}{section}
\numberwithin{table}{section}
\numberwithin{figure}{section}
\newtheorem{thm}{\bf Theorem}[section]
\newtheorem{cor}[thm]{\bf Corollary}
\newtheorem{prop}[thm]{Proposition}
\newtheorem{lem}[thm]{\bf Lemma}
\newtheorem{rem}[thm]{\bf Remark}
\newtheorem{exam}[thm]{\bf Example}
\newcommand{\normmm}[1]{{\left\vert\kern-0.25ex\left\vert\kern-0.25ex\left\vert #1
   \right\vert\kern-0.25ex\right\vert\kern-0.25ex\right\vert}}
\newcommand{\normm}[1]{{\vert\kern-0.25ex\vert\kern-0.25ex\vert #1
   \vert\kern-0.25ex\vert\kern-0.25ex\vert}}
\theoremstyle{definition}
\theoremstyle{remark}
\numberwithin{equation}{section}
\begin{document}

\title{Some nonlinear characterizations of reflexive Banach spaces}


\author{Yan Tang}
\address[Y. Tang]{School of Mathematics and Statistics,
Chongqing Technology and Bussiness University,
Chongqing 400067, China}
\address[Y. Tang]{College of Mathematics, Sichuan University, Chengdu 610064, China}
\curraddr{}
\email{tangyan@ctbu.edu.cn}
\thanks{The research is supported by the National Natural Science Foundation of China (No.11671278 and No.11971483) and the Science and Technology Research Project of Chongqing Municipal Education Commission(KJ1706154).}

\author{Shiqing Zhang}
\address[S. Q. Zhang]{College of Mathematics, Sichuan University, Chengdu 610064, China}
\curraddr{}
\email{zhangshiqing@msn.com}
\thanks{}

\author{Tiexin Guo$^*$}
\address[T. X. Guo]{School of Mathematics and Statistics, Central South University, Changsha 410083, China}
\curraddr{}
\email{tiexinguo@csu.edu.cn (Corresponding author)}
\thanks{$^*$ Corresponding author}

\subjclass[2020]{Primary 46A25, 46B80, 46N10}

\date{}

\dedicatory{}


\begin{abstract}
It is well known that in the calculus of variations and in optimization there exist many formulations of the fundamental propositions on the attainment of the infima of sequentially weakly lower semicontinuous coercive functions on reflexive Banach spaces. By either some constructive skills or the regularization skill by inf--convolutions we show in this paper that all these formulations together with their important variants are equivalent to each other and equivalent to the reflexivity of the underlying space. Motivated by this research, we also give a characterization for a normed space to be finite dimensional: a normed space is finite dimensional iff every continuous real--valued function defined on each bounded closed subset of this space can obtain its minimum, namely the converse of the classical Weierstrass theorem also holds true.
\end{abstract}

\keywords{Reflexive Banach spaces, inf--convolution, James theorem, lower semicontinuous
convex coercive function, sequentially weakly lower semicontinuous coercive
function, attainment of infima}
\maketitle





\section{Introduction and the main results of this paper}
\label{intro}
\par
Throughout this paper, we always assume that normed or Banach spaces occurring in this paper are over the real number field $\mathbb{R}$ in order to simplify the writing, in fact the corresponding results are still true for normed or Banach spaces over the complex number field $\mathbb{C}$. Besides, $\mathbb{N}$ stands for the set of positive integers.
\par
For clarity, let us first recall some known terminologies as follows.
\par
Let $(X,\|\cdot\|)$ be a normed space, $A$ a nonempty subset of $X$ and $f: A\rightarrow (-\infty, +\infty]$ a function. $f$ is said to be:
\begin{enumerate}[(1)]
  \item proper if $f(x_0)<+\infty$ for some $x_0\in A$.
  \item bounded if $f(D)$ is bounded for each bounded subset $D$ of $A$ (at this time $f$ must be real--valued).
  \item lower semicontinuous (briefly, l.s.c) at $x_0\in A$ if $\underline{\lim}_n f(x_n)\geq f(x_0)$ for any sequence $\{x_n, n\in \mathbb{N}\}$ in $A$ such that $\lim_n\|x_n-x_0\|=0$. Furthermore, $f$ is l.s.c (on $A$) if it is l.s.c at any $x\in A$.
  \item weakly lower semicontinuous (briefly, w.l.s.c) at $x_0\in A$ if $\underline{\lim}_{\alpha}f(x_{\alpha})\geq f(x_0)$ for any net $\{x_{\alpha}, \alpha\in \Gamma\}$ in $A$ such that $\{x_{\alpha}, \alpha\in \Gamma\}$ weakly converges to $x_0$. Further, $f$ is w.l.s.c if it is w.l.s.c at any $x\in A$.
  \item sequentially weakly lower semicontinuous (briefly, s.w.l.s.c) at $x_0\in A$ if $\underline{\lim}_n f(x_n)$ $\geq f(x_0)$ for any sequence $\{x_n, n\in \mathbb{N}\}$ in $A$ such that $\{x_n, n\in \mathbb{N}\}$ weakly converges to $x_0$. Further, $f$ is s.w.l.s.c if it is s.w.l.s.c at any $x\in A$.
  \item coercive if $A$ is unbounded and $\lim_{x\in A, \|x\|\rightarrow +\infty} f(x)=+\infty$.
\end{enumerate}
\par
The boundedness of $f$ in the sense of (2) as above, which is in accordance with the usual boundedness of a bounded linear functional, is different from another kind of boundedness of $f$ on $A$, namely $\sup\{|f(x)|: x\in A\}<+\infty$. Although the latter kind of boundedness is not used in this paper, we still say that $f$ is bounded (from) below on $A$ if $-\infty<\inf\{f(x): x\in A\}<+\infty$ in accordance with the references cited in this paper.
\par
The classical Weierstrass theorem states that a continuous real--valued function defined on a nonempty bounded closed subset of a finite--dimensional normed space attains its minimum and maximum, in 1870 Weierstrass first gave a counterexample showing that his theorem does not hold in an infinite--dimensional Banach space, which forces people to take seriously and subsequently systematically develop the direct method of calculus of variations, see \cite{Maw,Zhang} for the excellent historical comments. Today, it is well known that in the calculus of variations and in optimization there exist many (at least seven) propositions on the attainment of the infima of sequentially weakly lower semicontinuous coercive functions on reflexive Banach spaces. The purpose of this paper is to prove that all these propositions are equivalent to each other and equivalent to the reflexivity of the underlying space.
\par
To conveniently describe the idea of this paper, we restate in detail five of the well known seven propositions (the other two are also mentioned in passing) but keep their original meaning.
\par
For the reader's convenience, let us first recall that, according to Mazur's lemma, $A$ is closed $\Leftrightarrow$ $A$ is weakly closed $\Leftrightarrow$ $A$ is sequentially weakly closed for any nonempty convex subset $A$ of a normed space $E$, and hence $f$ is l.s.c $\Leftrightarrow$ $f$ is w.l.s.c $\Leftrightarrow$ $f$ is s.w.l.s.c for a convex function $f: A\rightarrow (-\infty,+\infty]$ when $A$ is a nonempty closed convex subset of $E$.
\par
Proposition \ref{proposition1.1} below initiated the abstract approach to problems in the calculus of variations, which is essentially due to Mazur and Schauder \cite{Mau,MS} who originally only considered a lower semicontinuous coercive real--valued lower bounded convex function defined on an unbounded closed convex subset of a reflexive Banach space and whose proof was by the KKM mapping principle. In fact, the condition ``lower bounded, namely bounded below'' is not necessarily assumed in advance since the condition is automatically satisfied, see \cite{Bre,Eke} for the case of a reflexive space and also Lemma \ref{lemma2.2} of this paper for the case of a general normed space.

\begin{prop}[{\cite[Corollary 3.23]{Bre}}; {\cite[Proposition 1.2]{Eke}}]\label{proposition1.1}
Let $E$ be a reflexive Banach space, then the following two statements hold:
\begin{enumerate}[(1)]
  \item For each nonempty bounded closed convex subset $A$ of $E$ and each proper l.s.c convex function $f: A\rightarrow (-\infty,+\infty]$, there exists $x_0\in A$ such that $f(x_0)=\inf_{x\in A}f(x)$.
  \item For each unbounded closed convex subset $A$ of $E$ and each proper l.s.c coercive convex function $f: A\rightarrow (-\infty,+\infty]$, there exists $x_0\in A$ such that $f(x_0)=\inf_{x\in A}f(x)$.
\end{enumerate}
\end{prop}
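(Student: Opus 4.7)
The plan is to apply the direct method of the calculus of variations: extract a minimizing sequence, use reflexivity to produce a weak cluster point, and close the argument with Mazur's lemma plus (sequential) weak lower semicontinuity. I would handle (1) first and then reduce (2) to it via coercivity.

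For part (1), I first argue that $\alpha:=\inf_{x\in A}f(x)$ is finite. Since $f$ is proper we have $\alpha<+\infty$; and since $A$ is bounded, the cited Lemma~\ref{lemma2.2} (valid even without reflexivity) gives $\alpha>-\infty$. Now pick a minimizing sequence $\{x_n\}\subset A$ with $f(x_n)\to\alpha$. Because $A$ is bounded in the reflexive space $E$, the Eberlein--\v{S}mulian theorem furnishes a subsequence $\{x_{n_k}\}$ weakly converging to some $x_0\in E$. Since $A$ is closed and convex, Mazur's lemma (explicitly invoked in the excerpt) tells us $A$ is sequentially weakly closed, so $x_0\in A$. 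The same remark in the excerpt says that a proper l.s.c.\ convex function is automatically s.w.l.s.c., whence
\[
f(x_0)\;\le\;\liminf_{k\to\infty}f(x_{n_k})\;=\;\alpha\;\le\;f(x_0),
\]
so $f(x_0)=\alpha$.

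For part (2), I would use coercivity to reduce to the bounded case. First I check $\alpha>-\infty$: if $\alpha=-\infty$, pick $\{x_n\}\subset A$ with $f(x_n)\to-\infty$; coercivity forbids $\|x_n\|\to+\infty$ (as that would force $f(x_n)\to+\infty$), so after passing to a subsequence $\{x_n\}$ is bounded. Exactly as in (1), a weakly convergent subsequence $x_{n_k}\rightharpoonup x_0\in A$ yields $f(x_0)\le\liminf_k f(x_{n_k})=-\infty$, contradicting that $f$ is $(-\infty,+\infty]$-valued. Hence $\alpha\in\mathbb{R}$. Now take a minimizing sequence $\{x_n\}\subset A$ with $f(x_n)\to\alpha$; coercivity again rules out an unbounded subsequence, so $\{x_n\}$ is bounded. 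The argument of (1) then produces the desired minimizer $x_0\in A$.

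There is no really hard step here: the essential ingredient is Eberlein--\v{S}mulian (extracting a weakly convergent subsequence from a bounded sequence in a reflexive space), and everything else is bookkeeping. The only subtle points to guard against are (i) silently assuming $f$ is bounded below (handled by Lemma~\ref{lemma2.2} in (1) and by a short coercivity argument in (2)), and (ii) conflating weak and sequential-weak lower semicontinuity, which is harmless here precisely because $f$ is convex and l.s.c., so by the Mazur-based equivalence recalled in the excerpt it is automatically s.w.l.s.c., which is exactly the form compatible with Eberlein--\v{S}mulian.
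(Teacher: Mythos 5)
Your proof is correct and follows exactly the route the paper itself indicates for Proposition~\ref{proposition1.1} (which it cites rather than reproves): establish that the infimum is finite and that a bounded minimizing sequence exists (via Lemma~\ref{lemma2.2} in case (1), via coercivity in case (2)), extract a weakly convergent subsequence by the Eberlein--\v{S}mulian characterization of reflexivity, and conclude with the weak closedness of $A$ and the sequential weak lower semicontinuity of the l.s.c.\ convex $f$, both supplied by Mazur's lemma. No gaps; the handling of the two subtle points you flag (boundedness below, and weak versus sequential-weak l.s.c.) is exactly right.
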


\par
Since Eberlin and Shmulyan established the characterization of a reflexive Banach space in 1940s, namely a Banach space is reflexive iff each bounded sequence of $E$ admits a weakly convergent subsequence (see \cite{Yas}), many famous mathematicians began considering the problem of the attainment of the infima of sequentially weakly lower semicontinuous functions defined on sequentially weakly closed subsets of a reflexive Banach space, so that Proposition \ref{proposition1.2} below was obtained as a generalization of (2) of Proposition \ref{proposition1.1}.

\begin{prop}[see {\cite{Chang}} or {\cite[Theorem 4.3.8]{Zhang}}]\label{proposition1.2}
Let $E$ be a reflexive Banach space. Then for each unbounded sequentially weakly closed subset $A$ of $E$ and each proper s.w.l.s.c coercive function $f: A\rightarrow (-\infty, +\infty]$, there exists $x_0\in A$ such that $f(x_0)=\inf_{x\in A}f(x).$
\end{prop}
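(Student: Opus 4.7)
The plan is a textbook direct method argument, whose only nonstandard input is the Eberlein--\v{S}mulyan sequential characterization of reflexivity.

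First I would set $m := \inf_{x\in A} f(x)$. Since $f$ is proper, there is some $y \in A$ with $f(y) < +\infty$, so $m < +\infty$. I will not yet assert $m > -\infty$; this will fall out of the construction of a minimizer. Choose any minimizing sequence $\{x_n\} \subset A$ with $f(x_n) \to m$ in $[-\infty, +\infty)$.

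Next I would show that $\{x_n\}$ is bounded. If not, then along some subsequence $\|x_{n_k}\| \to +\infty$, so by coercivity $f(x_{n_k}) \to +\infty$, which contradicts $f(x_n) \to m < +\infty$. (Note that $A$ is unbounded, so coercivity even makes sense on $A$; if I wanted to avoid the phrase ``coercivity'' for a bounded minimizing sequence, I could invoke Lemma~2.2 of this paper to know $f$ is bounded below, but this is not actually needed for the argument.) Hence $\{x_n\}$ is bounded.

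Now I invoke reflexivity of $E$: by the Eberlein--\v{S}mulyan theorem, the bounded sequence $\{x_n\}$ admits a subsequence $\{x_{n_k}\}$ that weakly converges to some $x_0 \in E$. Because $A$ is sequentially weakly closed, $x_0 \in A$. Applying the sequential weak lower semicontinuity of $f$ at $x_0$ gives
\[
 f(x_0) \leq \liminf_{k\to\infty} f(x_{n_k}) = \lim_{n\to\infty} f(x_n) = m.
\]
On the other hand $x_0 \in A$ forces $f(x_0) \geq m$. Hence $f(x_0) = m$; in particular $m$ is finite (since $f$ does not take the value $-\infty$) and is attained at $x_0$.

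I do not expect any real obstacle: the proof is a clean assembly of three ingredients -- properness gives $m < +\infty$, coercivity gives boundedness of a minimizing sequence, and reflexivity (through Eberlein--\v{S}mulyan) plus sequential weak closedness of $A$ plus s.w.l.s.c.\ of $f$ yield a weak subsequential limit in $A$ that achieves $m$. The only subtlety to watch is to avoid presupposing $m > -\infty$ when extracting a minimizing sequence, but the same compactness argument produces the minimizer regardless and then immediately upgrades $m$ to be finite.
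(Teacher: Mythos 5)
Your proof is correct and is exactly the standard direct--method argument that the paper itself points to for Proposition~\ref{proposition1.2} (it cites the result rather than proving it, but explicitly notes that the proof reduces to the existence of a bounded minimizing sequence plus the Eberlein--\v{S}mulyan theorem). Your handling of the a priori possibility $m=-\infty$ is also sound, since the compactness argument together with s.w.l.s.c.\ and the fact that $f$ does not take the value $-\infty$ rules it out a posteriori.
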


\par
A slightly less general formulation of Proposition \ref{proposition1.2} was also mentioned, e.g. in \cite{Guo} $f$ is restricted to be real--valued. It is obvious that Propositions \ref{proposition1.3} and \ref{proposition1.4} below are both a special case of Proposition \ref{proposition1.2}. Besides, Proposition \ref{proposition1.3} is a slight generalization of Theorem 1.1 of \cite[Chapter 3]{Dac}, where the original Theorem 1.1 only considered the function $\varphi: [0,+\infty)\rightarrow [0,+\infty)$ defined by $\varphi(t)=\alpha t$ for any $t\geq0$ and for some positive number $\alpha$, but in the process of applying it to some concrete examples, the function $\varphi: [0,+\infty) \rightarrow [0,+\infty)$ defined by $\varphi(t)=\alpha t^p$ for any $t\geq 0$ and for some positive numbers $\alpha$ and $p$ with $1<p<+\infty$, was also employed.

\begin{prop}[{\cite[Chapter 3, Theorem 1.1]{Dac}}]\label{proposition1.3}
Let $E$ be a reflexive Banach space. Then for each proper s.w.l.s.c function $f: E \rightarrow (-\infty,+\infty]$ satisfying $f(x)\geq \varphi(\|x\|)+ \beta$ for any $x\in E$ and for some real number $\beta$ and some nondecreasing function $\varphi: [0,+\infty) \rightarrow [0,+\infty )$ with $\lim_{t\rightarrow +\infty} \varphi(t)=+\infty$, there exists $x_0\in E$ such that $f(x_0)=\inf_{x\in E} f(x)$.
\end{prop}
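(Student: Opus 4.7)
The plan is to apply the standard direct method of the calculus of variations: extract a minimizing sequence, use the growth bound to make it norm-bounded, invoke reflexivity (via the Eberlin--Shmulyan theorem) to pass to a weakly convergent subsequence, and conclude by sequential weak lower semicontinuity. The only non-routine ingredient is reflexivity, so I do not expect any genuine obstacle beyond correctly identifying which characterization of reflexivity to use.

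First I would set $m := \inf_{x \in E} f(x)$. Since $f(x) \geq \varphi(\|x\|) + \beta \geq \beta$ for every $x \in E$, one has $m \geq \beta > -\infty$, while properness of $f$ forces $m < +\infty$. I would then choose a minimizing sequence $\{x_n\}_{n \in \mathbb{N}} \subset E$ with $f(x_n) \to m$. To show that $\{x_n\}$ is norm-bounded, observe that for $n$ large enough $f(x_n) \leq m + 1$, whence the hypothesis yields $\varphi(\|x_n\|) \leq m + 1 - \beta$; if some subsequence $\{x_{n_k}\}$ had $\|x_{n_k}\| \to +\infty$, the monotonicity of $\varphi$ together with $\lim_{t \to \infty}\varphi(t) = +\infty$ would force $\varphi(\|x_{n_k}\|) \to +\infty$, a contradiction.

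Next I would invoke the Eberlin--Shmulyan characterization of reflexivity to extract a subsequence $\{x_{n_k}\}$ weakly convergent to some $x_0 \in E$. Applying the sequential weak lower semicontinuity of $f$ at $x_0$ gives
$$f(x_0) \leq \liminf_k f(x_{n_k}) = \lim_n f(x_n) = m,$$
while $f(x_0) \geq m$ by the definition of $m$; therefore $f(x_0) = m$, completing the proof.

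Alternatively, as the paper itself remarks, Proposition \ref{proposition1.3} is a special case of Proposition \ref{proposition1.2}: provided $E \neq \{0\}$, the space $E$ is trivially an unbounded sequentially weakly closed subset of itself, and the growth condition $f(x) \geq \varphi(\|x\|)+\beta$ with $\varphi$ nondecreasing and $\varphi(t) \to +\infty$ immediately makes $f$ coercive, so one may simply quote Proposition \ref{proposition1.2} without redoing the minimizing-sequence argument.
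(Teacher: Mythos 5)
Your proof is correct and follows exactly the route the paper intends: the paper states Proposition \ref{proposition1.3} as a known result, observing that it is a special case of Proposition \ref{proposition1.2} and that all of Propositions \ref{proposition1.1}--\ref{proposition1.4} are proved by extracting a bounded minimizing sequence and applying the Eberlin--Shmulyan theorem together with sequential weak lower semicontinuity, which is precisely your argument. Both your direct argument and your reduction to Proposition \ref{proposition1.2} are sound, so there is nothing to add.
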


\begin{prop}[{\cite[Theorem 6.1.1]{Ber}}]\label{proposition1.4}
Let $E$ be a reflexive Banach space. Then for each unbounded sequentially weakly closed subset $A$ of $E$ and each s.w.l.s.c bounded coercive function $f: A \rightarrow (-\infty,+\infty)$, there exists $x_0\in A$ such that $f(x_0)=\inf_{x\in A}f(x)$.
\end{prop}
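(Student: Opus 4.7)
The plan is to derive Proposition~\ref{proposition1.4} directly from the Eberlein--Shmulyan characterization of reflexive Banach spaces via the classical direct method. Since Proposition~\ref{proposition1.4} is formally a special case of Proposition~\ref{proposition1.2} (real-valued rather than extended-real-valued, with the added boundedness-on-bounded-sets hypothesis), one could in principle just invoke Proposition~\ref{proposition1.2}. However, because one theme of the paper is to compare these formulations carefully, I sketch a short self-contained argument using only the listed hypotheses.

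First I would set $m := \inf_{x\in A} f(x) \in [-\infty,+\infty)$ and choose a minimizing sequence $\{x_n\}\subset A$ with $f(x_n)\to m$. The crucial first observation is that $\{x_n\}$ must be norm-bounded: otherwise a subsequence would satisfy $\|x_n\|\to+\infty$, and coerciveness would then force $f(x_n)\to+\infty$, contradicting $f(x_n)\to m<+\infty$ (which is guaranteed once we fix any $f(x_1)\in\mathbb{R}$ in the minimizing sequence).

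Reflexivity now enters through the Eberlein--Shmulyan theorem: the bounded sequence $\{x_n\}$ admits a weakly convergent subsequence $x_{n_k}\rightharpoonup x_0$ in $E$. Sequential weak closedness of $A$ places $x_0\in A$, and sequential weak lower semicontinuity of $f$ at $x_0$ yields
\[
f(x_0) \;\leq\; \liminf_{k\to\infty} f(x_{n_k}) \;=\; m.
\]
Combined with $f(x_0)\geq m$ from the definition of infimum this forces $f(x_0)=m$; boundedness of $f$ on the bounded set $\{x_n\}\cup\{x_0\}\subset A$ guarantees $m\in\mathbb{R}$, so every quantity in sight is finite.

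The main obstacle is conceptual rather than technical: one must pair each hypothesis with its role. Reflexivity supplies the weak subsequential limit via Eberlein--Shmulyan; coerciveness keeps the minimizing sequence bounded; sequential weak closedness of $A$ retains the limit inside $A$; s.w.l.s.c.\ transfers the infimum to $x_0$; and the boundedness of $f$ on bounded sets secures finiteness of $m$ a priori. No regularization, inf-convolution, or KKM machinery is needed here, since $f$ is already real-valued and bounded on bounded sets, so the proof reduces to the cleanest form of the direct method.
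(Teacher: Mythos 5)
Your proof is correct and follows exactly the route the paper indicates for this result: the paper states Proposition~\ref{proposition1.4} as a known citation (noting it is a special case of Proposition~\ref{proposition1.2}) and explicitly remarks that it is proved via the Eberlein--Shmulyan theorem applied to a bounded minimizing sequence, which is precisely your argument. Your accounting of where each hypothesis enters (coercivity for boundedness of the minimizing sequence, sequential weak closedness for $x_0\in A$, s.w.l.s.c.\ for $f(x_0)\leq m$, boundedness of $f$ for $m>-\infty$) is accurate and complete.
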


\par
Let $A$ be a nonempty subset of a normed space $E$ and $f: A \rightarrow (-\infty,+\infty]$ be an extended real--valued function, let us recall that a sequence $\{x_n,n\in \mathbb{N}\}$ in $A$ is a minimizing sequence of $f$ if $\{f(x_n), n\in \mathbb{N}\}$ converges to $\inf_{x\in A}f(x)$ in a nonincreasing manner. It is well known that Propositions \ref{proposition1.1}--\ref{proposition1.4} together with their less general variants mentioned above all can be proved by means of the Eberlin--Shmulyan theorem, and the essence of their proofs comes down to the existence of a bounded minimizing sequence of $f$ in these propositions, this observation leads Mawhin and Willem directly to Proposition \ref{proposition1.5} below in a more refined and general form.

\begin{prop}[{\cite[Theorem 1.1]{Maw}}]\label{proposition1.5}
Let $E$ be a reflexive Banach space. Then for each s.w.l.s.c proper function $f: E \rightarrow (-\infty,+\infty]$ with a bounded minimizing sequence, there exists $x_0\in E$ such that $f(x_0)=\inf_{x\in E}f(x)$.
\end{prop}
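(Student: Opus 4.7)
The plan is to reduce Proposition \ref{proposition1.5} to a direct application of the Eberlein--Shmulyan theorem together with the very definition of s.w.l.s.c, which is in fact the core observation already attributed to Mawhin and Willem in the paragraph preceding the statement.

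First, I would fix a bounded minimizing sequence $\{x_n, n\in\mathbb{N}\}$ of $f$, i.e., one for which $f(x_n)$ decreases to $m:=\inf_{x\in E}f(x)$. Since $f$ is proper we have $m<+\infty$. Because $E$ is reflexive and $\{x_n\}$ is bounded, the Eberlein--Shmulyan theorem furnishes a subsequence $\{x_{n_k}, k\in\mathbb{N}\}$ and a point $x_0\in E$ such that $\{x_{n_k}\}$ converges weakly to $x_0$.

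Next, since $f$ is s.w.l.s.c at $x_0$, the weak convergence $x_{n_k}\rightharpoonup x_0$ yields
\[
f(x_0)\leq \underline{\lim}_k f(x_{n_k})=\lim_k f(x_{n_k})=m,
\]
the middle equality using that $\{f(x_n)\}$ converges (monotonically) to $m$, so every subsequence has the same limit. Combined with the obvious inequality $f(x_0)\geq m$, this gives $f(x_0)=m=\inf_{x\in E}f(x)$, as required. Note that as a byproduct $m\in\mathbb{R}$: the equality $f(x_0)=m$ together with $f$ taking values in $(-\infty,+\infty]$ automatically rules out $m=-\infty$, so we do not need to assume beforehand that $f$ is bounded below.

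There is essentially no hard step to anticipate: reflexivity is used exactly once (through Eberlein--Shmulyan to extract a weakly convergent subsequence from the bounded minimizing sequence), the monotonicity built into the definition of a minimizing sequence immediately transfers the value $m$ to every subsequence, and the sequential weak lower semicontinuity hypothesis is tailored to close the argument. The only point worth stressing is why a subsequence, rather than the original sequence, is enough: this is precisely the content of sequential (as opposed to net) lower semicontinuity, which pairs naturally with Eberlein--Shmulyan extraction.
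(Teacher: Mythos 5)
Your proof is correct and is exactly the standard Eberlein--Shmulyan argument that the paper attributes to Mawhin and Willem (the paper cites Proposition \ref{proposition1.5} rather than reproving it, but the surrounding discussion makes clear this is the intended route). Your side remark that $m>-\infty$ falls out automatically from the s.w.l.s.c inequality and the codomain $(-\infty,+\infty]$ is a nice touch and consistent with the paper's observation that lower boundedness need not be assumed in advance.
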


\par
For a nonempty subset $K$ of a normed space $E$, its indicator function $I_K: E \rightarrow [0,+\infty]$ is defined by

$$ I_K(x)=\left\{
\begin{aligned}
&0, &\text{when}~ x\in K;  \\
&+\infty,  & \text{when}~x\in E\setminus K.
\end{aligned}
\right.
$$
It is well known that $K$ is convex $\Leftrightarrow$ $I_K$ is convex, $K$ is closed $\Leftrightarrow$ $I_K$ is l.s.c, $K$ is weakly closed $\Leftrightarrow$ $I_K$ is w.l.s.c, and $K$ is sequentially weakly closed $\Leftrightarrow$ $I_K$ is s.w.l.s.c.
\par
The classical James theorem \cite{Jam} states that a Banach space $E$ is reflexive iff there exists $x\in B$ for each $g\in E^*$ such that $g(x)=\|g\|$, where $B$ stands for the closed unit ball of $E$, namely $B=\{x\in E: \|x\|\leq 1\}$. According to the James theorem, it is very easy to observe that the conclusions of Propositions \ref{proposition1.1}, \ref{proposition1.2} and \ref{proposition1.5} have implied the reflexivity of the underlying space $E$, respectively, namely the reflexivity of $E$ is both sufficient and necessary. Let us only take Proposition \ref{proposition1.1} for example: for any $g\in E^*$, taking $A:=B$ (the closed unit ball of $E$) and $f:=-g|_B$ (where $g|_B$ stands for the restriction of $g$ to $B$) in (1) of Proposition \ref{proposition1.1} yields some $x_0\in B$ such that $g(x_0)=\sup_{x\in B} g(x)=\|g\|$, and hence $E$ must be reflexive by the James theorem; similarly, for any $g\in E^*$, taking $A:=E$ and $f:=-g+I_B$ in (2) of Proposition \ref{proposition1.1} yields some $x_0\in B$ such that $g(x_0)=\|g\|$, then $E$ is again reflexive.

\par
But it is not very easy to solve the problem of whether the conclusions of Propositions \ref{proposition1.3} and \ref{proposition1.4} also imply the reflexivity of the underlying space $E$, respectively, since the functions $f$ in the two propositions are both a special class of coercive functions and their links with such functions as $-g+I_B$ with $g\in E^*$ are no longer obvious. Dacorogna earlier pointed out in \cite[Remark, p.48]{Dac} that the hypothesis on the reflexivity of $E$ cannot be dropped in general. Example \ref{example1.6} below, which is only a slight modification to the original example given in \cite[p.48]{Dac}, not only better explains Dacorogna's viewpoint but also motivates some constructive skills.

\begin{exam}\label{example1.6}
Let $E=C[0,1]$, namely the Banach space of continuous real--valued functions on $[0,1]$ endowed with the norm $\|\cdot\|$ defined by $\|u\|=\max_{t\in [0,1]} |u(t)|$ for any $u\in C[0,1]$, it is known that $C[0,1]$ is not reflexive. Further, let $M=\{u\in C[0,1]: \int_0^{\frac{1}{2}} u(t) dt - \int^1_{\frac{1}{2}} u(t) dt= 1\}$ and define $f: C[0,1] \rightarrow [0,+\infty]$ as follows:
$$ f(u)=\left\{
\begin{aligned}
&\|u\|, &\text{if}~ u\in M;  \\
&+\infty,  & \text{if}~u\in C[0,1]\setminus M.
\end{aligned}
\right.
$$
Then $f$ is a proper l.s.c convex (and hence also s.w.l.s.c) function since $M$ is an unbounded closed convex subset of $C[0,1]$ as a closed hyperplane, and further satisfies $f(u) \geq \varphi(\|u\|)+ \beta$ for any $u\in C[0,1]$ and for the function $\varphi: [0,+\infty) \rightarrow [0,+\infty)$ with $\varphi(t)=t$ for any $t\geq 0$, and $\beta=0$. It is easy to see that $\inf_{u\in M} f(u)=\inf_{u\in C[0,1]} f(u)$. Further, since $f(u)=\|u\|\geq \int_0^1 |u(t)| dt \geq \int_0^{\frac{1}{2}} u(t) dt- \int^1_{\frac{1}{2}} u(t) dt =1$ for any $u\in M$, then $\inf_{u\in M} f(u) \geq 1$. To work out $\inf_{u\in M} f(u)=1$, define a sequence $\{\tilde{u}_n, n\in \mathbb{N}\}$ in $C[0,1]$ as follows:
$$ \tilde{u}_n(t)=\left\{
\begin{aligned}
&1, &\text{if}~ t\in [0, \frac{1}{2}- \frac{1}{n+1}];  \\
&-(n+1)t+ \frac{n+1}{2}, & \text{if}~t\in [\frac{1}{2}- \frac{1}{n+1}, \frac{1}{2}+ \frac{1}{n+1}]; \\
&-1,  & \text{if}~t\in [\frac{1}{2}+ \frac{1}{n+1}, 1].
\end{aligned}
\right.
$$
Then it is easy to check that $\int_0^{\frac{1}{2}} \tilde{u}_n(t) dt- \int^1_{\frac{1}{2}} \tilde{u}_n(t) dt= 1-\frac{1}{n+1}$, then $u_n: = \tilde{u}_n/ (1- \frac{1}{n+1}) \in M$ and it is also clear that $\|u_n\|= \frac{n+1}{n}$, so $\inf_{u\in M} f(u) \leq \inf_{n\in \mathbb{N}} \frac{n+1}{n} =1$, which shows $\inf_{u\in M} f(u)=1$. But, as shown in \cite[p.49]{Dac}, there exists no $u\in M$ such that $f(u)=1$.
\end{exam}

\par
Existence of Example \ref{example1.6} is no surprise, this paper will, in fact, show that the conclusions of Propositions \ref{proposition1.3} and \ref{proposition1.4} both imply the reflexivity of the underlying space $E$. Precisely, by either the regularization skill by inf--convolutions from convex analysis or some constructive skills we establish the relations among Proposition \ref{proposition1.3}, Proposition \ref{proposition1.4} and the James theorem so that our target can be achieved. Let $E$ be a normed space and $f: E\rightarrow (-\infty, +\infty]$ a function bounded below, it is well known that the Pasch--Hausdorff envelope $f_k$ of $f$ is a $k$--Lipschitz function for any positive number $k$ (see Section \ref{section2} for the related details and references), which is just the so--called regularization skill. Besides the Lipschitz property of $f_k$ is used in this paper, Proposition \ref{proposition2.5} of this paper, which is obtained under a considerable motivation from Hiriart--Urruty's work \cite{Hir1} and Bougeard, Penot and Pommellet's work \cite{BPP}, really plays a crucial role in connecting the inf--convolution skill to the work of this paper since $f_k$ possesses the same infimum and global minimizers as $f$.

\par
When we have known that the conclusions in Propositions \ref{proposition1.1}--\ref{proposition1.5} characterize the reflexivity of the underlying space, we naturally want to give a similar characterization of a finite dimensional normed space, namely Theorem \ref{theorem1.9} of this paper, in particular Theorem \ref{theorem1.9} shows that the converse of the classical Weierstrass theorem is also true.

\par
The main results of this paper can now stated as follows: Theorem \ref{theorem1.7} below is devoted to the case of convex functions, Theorem \ref{theorem1.8} to the case of general nonlinear functions, Theorem \ref{theorem1.9} to a similar characterization for a finite--dimensional normed space, and Corollary \ref{corollary1.10} is an interesting consequence of comparing Theorem \ref{theorem1.8} and Theorem \ref{theorem1.9}.

\begin{thm}\label{theorem1.7}
Let $E$ be a Banach space. Then the following statements are equivalent to each other:
\begin{enumerate}[(1)]
  \item $E$ is reflexive.
  \item For each proper l.s.c convex function $f: E \rightarrow (-\infty,+\infty]$ such that $f$ has a bounded minimizing sequence, there exists $x_0\in E$ such that $f(x_0)=\inf_{x\in E} f(x)$.
  \item For each bounded nonempty closed convex subset $A$ of $E$ and each proper l.s.c convex function $f: A \rightarrow (-\infty,+\infty]$, there exists $x_0\in A$ such that $f(x_0)=\inf_{x\in A} f(x)$.
  \item For each unbounded closed convex subset $A$ of $E$ and each coercive l.s.c convex function $f: A \rightarrow (-\infty,+\infty)$, there exists $x_0\in A$ such that $f(x_0)=\inf_{x\in A} f(x)$.
  \item For each unbounded closed convex subset $A$ of $E$ and each bounded coercive l.s.c convex function $f: A \rightarrow (-\infty, +\infty)$, there exists $x_0\in A$ such that $f(x_0)= \inf_{x\in A} f(x)$.
  \item For each coercive convex Lipschitz function $f: E \rightarrow (-\infty, +\infty)$, there exists $x_0\in E$ such that $f(x_0)=\inf_{x\in E} f(x)$.
  \item For each unbounded closed convex subset $A$ of $E$ and each proper l.s.c coercive convex function $f: A \rightarrow (-\infty, +\infty]$, there exists $x_0\in A$ such that $f(x_0)=\inf_{x\in A} f(x)$.
  \item For each proper l.s.c convex function $f: E \rightarrow (-\infty,+\infty]$ satisfying $f(x) \geq \varphi(\|x\|) + \beta$ for any $x\in E$ and for some real number $\beta$ and some nondecreasing function $\varphi: [0,+\infty) \rightarrow [0,+\infty)$ with $\lim_{t\rightarrow +\infty} \varphi(t) =+\infty$, there exists $x_0\in E$ such that $f(x_0)=\inf_{x\in E} f(x)$.
  \item For each bounded linear functional $g$ on $E$ there exists $x_0\in E$ with $\|x_0\| \leq 1$ such that $g(x_0)=\|g\|$.
\end{enumerate}
\end{thm}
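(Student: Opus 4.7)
The plan is to establish the nine-fold equivalence via the cycle $(1)\Leftrightarrow(9)$ together with $(1)\Rightarrow(k)\Rightarrow(9)$ for each $k\in\{2,\ldots,8\}$. The step $(1)\Leftrightarrow(9)$ is just James' theorem restated. For the forward implications $(1)\Rightarrow(k)$ I will invoke Propositions~\ref{proposition1.1}--\ref{proposition1.5} combined with Mazur's lemma, which for convex functions on closed convex sets makes lower semicontinuity and sequential weak lower semicontinuity coincide. Concretely, $(1)\Rightarrow(3)$ and $(1)\Rightarrow(7)$ are parts (1) and (2) of Proposition~\ref{proposition1.1}; $(1)\Rightarrow(4),(5),(6)$ are immediate specializations of (7), since coercive convex real-valued or convex Lipschitz functions are automatically l.s.c.; $(1)\Rightarrow(8)$ is Proposition~\ref{proposition1.3}; and $(1)\Rightarrow(2)$ is Proposition~\ref{proposition1.5}.

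The substantive content lies in the reverse direction. For each $k\in\{2,\ldots,8\}$, given an arbitrary $g\in E^{*}\setminus\{0\}$, I plan to construct a test function whose minimizer furnishes a norm-attainer for $g$. For~(3) take $A=B$ (the closed unit ball) and $f=-g|_{B}$; a minimizer $x_{0}\in B$ satisfies $g(x_{0})=\|g\|$. For~(7) and~(2) take $A=E$ and $f=I_{B}-g$: this $f$ is proper, l.s.c., convex, trivially coercive (being $+\infty$ off the bounded set $B$), and any minimizing sequence is forced into $B$ and hence bounded; a minimizer must lie in $B$ and realize $g(x_{0})=\|g\|$. For~(4),~(5) and~(8) the candidate is $f(x)=\frac{1}{2}\|x\|^{2}-g(x)$, which is continuous, convex, real-valued, coercive, bounded on bounded subsets, and dominated from below by $\varphi(\|x\|)+\beta$ for a suitably chosen nondecreasing $\varphi$ diverging to $+\infty$ together with a suitable real constant $\beta$. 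A Fenchel-type computation yields $\inf_{E}f=-\frac{1}{2}\|g\|^{2}$, and at any minimizer $x_{0}$ this forces $\|x_{0}\|=\|g\|$ and $g(x_{0})=\|g\|^{2}$, so that $x_{0}/\|g\|$ is a unit vector at which $g$ attains its norm.

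The main obstacle is $(6)\Rightarrow(9)$, because the natural candidate $\frac{1}{2}\|x\|^{2}-g(x)$ is not globally Lipschitz and the Lipschitz constraint is quite rigid. Here my plan is to exploit the Pasch--Hausdorff regularization: starting once more from $h:=I_{B}-g$, form its envelope
\[
h_{k}(x):=\inf_{y\in E}\bigl(h(y)+k\|x-y\|\bigr)=\inf_{y\in B}\bigl(-g(y)+k\|x-y\|\bigr)
\]
for some $k>\|g\|$. Then $h_{k}$ is convex as an infimal convolution of convex functions, $k$-Lipschitz on $E$ by the standard property of inf-convolution with $k\|\cdot\|$, and coercive, since for $\|x\|\geq 2$ one has $h_{k}(x)\geq-\|g\|+k(\|x\|-1)\to+\infty$ as $\|x\|\to+\infty$. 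Invoking Proposition~2.5 of Section~2, $h_{k}$ shares the same infimum and the same global minimizers with $h$; applying~(6) to $h_{k}$ therefore produces a minimizer $x_{0}\in B$ satisfying $g(x_{0})=\|g\|$, and the cycle closes. This is precisely the bridging role advertised in the introduction for Proposition~2.5.
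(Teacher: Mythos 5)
Your proposal is correct, but it is organized quite differently from the paper's proof. The paper establishes the single cycle $(1)\Rightarrow(2)\Rightarrow\cdots\Rightarrow(9)\Rightarrow(1)$, and its one genuinely hard step is $(6)\Rightarrow(7)$: an arbitrary proper l.s.c.\ coercive convex $f$ on an unbounded closed convex $A$ is first restricted to a sublevel set, extended by $+\infty$, and then replaced by its Pasch--Hausdorff envelopes, which by Lemmas~\ref{lemma2.2} and \ref{lemma2.4} are convex, coercive and Lipschitz, so that (6) and Proposition~\ref{proposition2.5} apply; the step $(8)\Rightarrow(9)$ then uses $-g+I_B$ much as you do for (2) and (7). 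You instead use (9) as a hub, proving $(1)\Rightarrow(k)\Rightarrow(9)\Rightarrow(1)$ for each $k$; this confines the inf--convolution machinery to the single concrete function $I_B-g$ in $(6)\Rightarrow(9)$, so no regularization of general coercive convex functions is needed, at the cost of leaning on Propositions~\ref{proposition1.1}, \ref{proposition1.3} and \ref{proposition1.5} separately for the forward direction (the paper derives (3)--(8) internally from (2), i.e.\ from Proposition~\ref{proposition1.5} alone). Your treatment of $(4),(5),(8)\Rightarrow(9)$ via $f(x)=\frac12\|x\|^2-g(x)$, with $\inf_E f=-\frac12\|g\|^2$ and the equality case of $g(x_0)\le\|g\|\,\|x_0\|\le\frac12\|x_0\|^2+\frac12\|g\|^2$ forcing $\|x_0\|=\|g\|$ and $g(x_0)=\|g\|^2$, is a clean argument that does not appear in the paper. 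Two small remarks: your parenthetical claim that coercive convex real-valued functions are ``automatically l.s.c.''\ is unnecessary (and false in general --- e.g.\ $\max(\|x\|-1,L(x))$ for a discontinuous linear $L$ is convex, real-valued and coercive but not l.s.c.); it is harmless here because (4) and (5) already assume lower semicontinuity and (6) only needs that Lipschitz implies l.s.c. Likewise the restriction $k>\|g\|$ in $(6)\Rightarrow(9)$ is not needed, since $h_k(x)\ge-\|g\|+k(\|x\|-1)$ gives coercivity for every $k>0$.
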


\begin{thm}\label{theorem1.8}
Let $E$ be a Banach space. Then the following statements are equivalent to each other:
\begin{enumerate}[(1)]
  \item $E$ is reflexive.
  \item For each proper s.w.l.s.c function $f: E \rightarrow (-\infty, +\infty]$ such that $f$ has a bounded minimizing sequence, there exists $x_0\in E$ such that $f(x_0)=\inf_{x\in E} f(x)$.
  \item For each bounded nonempty sequentially weakly closed subset $A$ of $E$ and each proper s.w.l.s.c function $f: A \rightarrow (-\infty, +\infty]$, there exists $x_0\in A$ such that $f(x_0)=\inf_{x\in A} f(x)$.
  \item For each unbounded sequentially weakly closed subset $A$ of $E$ and each proper s.w.l.s.c coercive function $f: A \rightarrow (-\infty,+\infty]$, there exists $x_0\in A$ such that $f(x_0)=\inf_{x\in A} f(x)$.
  \item For each unbounded sequentially weakly closed subset $A$ of $E$ and each s.w.l.s.c coercive function $f: A \rightarrow (-\infty,+\infty)$, there exists $x_0\in A$ such that $f(x_0)=\inf_{x\in A}f(x)$.
  \item For each proper s.w.l.s.c function $f: E \rightarrow (-\infty,+\infty]$ satisfying $f(x)\geq \varphi(\|x\|)+ \beta$ for any $x\in E$ and for some real number $\beta$ and some nondecresing function $\varphi: [0,+\infty) \rightarrow [0,+\infty)$ with $\lim_{t \rightarrow +\infty} \varphi (t)=+\infty$, there exists $x_0\in E$ such that $f(x_0)=\inf_{x\in E} f(x)$.
  \item For each unbounded sequentially weakly closed subset $A$ of $E$ and each s.w.l.s.c bounded coercive function $f: A \rightarrow (-\infty,+\infty)$, there exists $x_0\in A$ such that $f(x_0)=\inf_{x\in A} f(x)$.
  \item For each s.w.l.s.c coercive Lipschitz function $f: E \rightarrow (-\infty, +\infty)$ there exists $x_0\in E$ such that $f(x_0)=\inf_{x\in E} f(x)$.
  \item For each bounded linear functional $g$ on $E$ there exists $x_0\in E$ with $\|x_0\|\leq 1$ such that $g(x_0)=\|g\|$.
\end{enumerate}
\end{thm}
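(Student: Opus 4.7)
The plan is to arrange the nine statements into a cycle passing through (9), which is equivalent to (1) by the James theorem \cite{Jam}. Concretely I would prove $(1)\Rightarrow(2)$, then $(2)\Rightarrow(k)$ for each $k\in\{3,\ldots,8\}$, then that any one of $(3),\ldots,(8)$ implies $(9)$, and finally $(9)\Rightarrow(1)$ by James. The regularisation via Pasch--Hausdorff envelopes alluded to in Section~2 (in particular Proposition~2.5) is available as a back--up tool for producing real--valued Lipschitz substitutes with the same infimum; as it turns out, in Theorem~\ref{theorem1.8} an explicit direct construction already suffices, so the inf--convolution machinery need not be invoked here.

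The implication $(1)\Rightarrow(2)$ is the Eberlin--Shmulyan argument: a bounded minimizing sequence $\{x_n\}$ of $f$ admits, by reflexivity, a weakly convergent subsequence $x_{n_k}\rightharpoonup x_0$, and s.w.l.s.c.\ gives $f(x_0)\le\underline{\lim}_k f(x_{n_k})=\inf_E f$. For $(2)\Rightarrow(k)$ with $k\in\{3,4,5,6,7\}$, I would extend $f$ from $A$ to $E$ by $\tilde f:=+\infty$ on $E\setminus A$: sequential weak closedness of $A$ preserves s.w.l.s.c., and any minimizing sequence of $\tilde f$ lies in $A$ and is automatically bounded, either because $A$ is bounded (case~(3)) or by coercivity (cases (4), (5), (7), and case~(6) after observing that $f(x)\ge\varphi(\|x\|)+\beta$ with $\varphi(t)\to+\infty$ is itself a coercivity condition). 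Then (2) produces the required minimizer. The implication $(2)\Rightarrow(8)$ is immediate since coercivity on $E$ forces minimizing sequences to be bounded.

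The reverse direction exploits a single explicit family of test functions. Given $g\in E^*\setminus\{0\}$ and any $k>\|g\|$, set
\[
 f(x):=-g(x)+k\cdot\max\bigl\{\|x\|-1,\ 0\bigr\},\qquad x\in E.
\]
Since $\|\cdot\|=\sup_{h\in B_{E^*}}|h(\cdot)|$ is s.w.l.s.c.\ as a supremum of weakly continuous functionals, so is $\max\{\|\cdot\|-1,0\}$, and adding the weakly continuous term $-g$ preserves s.w.l.s.c.; $f$ is $(\|g\|+k)$--Lipschitz, real--valued, bounded on bounded sets, and coercive thanks to the elementary bound $f(x)\ge(k-\|g\|)\|x\|-k$. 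A direct estimate gives $\inf_E f=-\|g\|$ with minimum set equal to $\{x\in B:g(x)=\|g\|\}$, so applying (5), (7), or (8) with $A:=E$ yields $x_0\in B$ with $g(x_0)=\|g\|$, that is, (9). For the remaining cases I would use simpler convex choices: (3) with $A:=B$ (sequentially weakly closed by Mazur's lemma) and $f:=-g|_B$; (4) with $A:=E$ and $f:=-g+I_B$; and (6) with the same $f:=-g+I_B$, which satisfies $f(x)\ge\|x\|-\|g\|-1$ on all of $E$, i.e.\ the hypothesis with $\varphi(t)=t$ and $\beta=-\|g\|-1$. In every case the minimizer lies in $B$ and attains the norm of $g$.

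The step I expect to be the main obstacle is precisely the real--valued cases (5), (7), (8): the indicator--function trick used for (3), (4), (6) is unavailable, and the test function must be simultaneously s.w.l.s.c., real--valued, coercive, and (for (8)) Lipschitz, while still pinning its minimum set inside $B$. The penalty term $k\max\{\|\cdot\|-1,0\}$ overcomes this once one (i) records the s.w.l.s.c.\ of the norm via the Hahn--Banach supremum formula above and (ii) chooses $k$ strictly greater than $\|g\|$, so that the coercivity and the inclusion of the minimum set in $B$ both follow from linear estimates. Should any subcase resist this direct argument, the Pasch--Hausdorff envelope applied to $-g+I_B$ provides a uniformly Lipschitz real--valued replacement with the same infimum and minimizers, and closes the cycle via Proposition~2.5.
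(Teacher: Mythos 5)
Your proof is correct, and for the crucial ``necessity'' direction it takes a genuinely different route from the paper. The paper proves the linear chains $(1)\Rightarrow(2)\Rightarrow(3)\Rightarrow(4)\Rightarrow(5)/(6)\Rightarrow(7)\Rightarrow(8)\Rightarrow(9)\Rightarrow(1)$, and its only passage back to $(9)$ is from $(8)$, achieved by regularizing $f=-g+I_B$ through its Pasch--Hausdorff envelopes $f_n$ and invoking Proposition~2.5 to transfer minimizers back to $f$ (the convexity of $f$ is what guarantees each $f_n$ is s.w.l.s.c., which is why the paper must route everything through $(8)$). You instead adopt a hub structure, $(2)\Rightarrow(k)\Rightarrow(9)$ for every $k$, and for the real--valued cases $(5)$, $(7)$, $(8)$ you replace the abstract envelope by the explicit penalized functional $f=-g+k\max\{\|\cdot\|-1,0\}$ with $k>\|g\|$; your verifications (s.w.l.s.c.\ since the function is continuous and convex, $(\|g\|+k)$--Lipschitz, coercive via $f(x)\ge(k-\|g\|)\|x\|-k$, infimum $-\|g\|$ attained only inside $B$ at norm--attaining points of $g$) are all sound, and in effect you have built by hand the Lipschitz regularization of $-g+I_B$ that the paper extracts from Lemma~2.4 and Proposition~2.5. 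What your approach buys is elementarity and independence of the inf--convolution machinery for this theorem; what the paper's approach buys is a collection of direct implications among the intermediate statements (e.g.\ $(6)\Rightarrow(7)$ via sublevel--set truncation) rather than implications obtained only by the detour through $(9)$ and $(1)$, together with a template that reappears verbatim in Theorems~1.7 and~1.9. The only points worth tightening in a write--up are cosmetic: in $(2)\Rightarrow(k)$ one should note that a minimizing sequence of the extension $\tilde f$ can be chosen with all terms in $A$ because the infimum is $<+\infty$, and that $\tilde f$ is s.w.l.s.c.\ precisely because $A$ is sequentially weakly closed (a weak limit of points of $A$ carrying bounded $\tilde f$--values must lie in $A$).
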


\par
(1) $\Rightarrow$ (4) of Theorem \ref{theorem1.9} below, whose proof is very easy, was already stated in \cite{Chang}, (1) $\Rightarrow$ (9) is just the classical Weierstrass theorem. The most interesting part of the proof of Theorem \ref{theorem1.9} is (9) $\Rightarrow$ (1) or (10) $\Rightarrow$ (1).

\begin{thm}\label{theorem1.9}
Let $E$ be a normed space. Then the following statements are equivalent to each other:
\begin{enumerate}[(1)]
  \item $E$ is finite--dimensional.
  \item For each proper l.s.c function $f: E \rightarrow (-\infty,+\infty]$ such that $f$ has a bounded minimizing sequence, there exists $x_0\in E$ such that $f(x_0)=\inf_{x\in E}f(x)$.
  \item For each bounded nonempty closed subset $A$ of $E$ and each proper l.s.c function $f: A \rightarrow (-\infty,+\infty]$, there exists $x_0\in A$ such that $f(x_0)=\inf_{x\in A}f(x)$.
  \item For each unbounded closed subset $A$ of $E$ and each proper coercive l.s.c function $f: A \rightarrow (-\infty,+\infty]$, there exists $x_0\in A$ such that $f(x_0)=\inf_{x\in A} f(x)$.
  \item For each unbounded closed subset $A$ of $E$ and each coercive l.s.c function $f: A \rightarrow (-\infty,+\infty)$, there exists $x_0\in A$ such that $f(x_0)=\inf_{x\in A} f(x)$.
  \item For each proper l.s.c function $f: E \rightarrow (-\infty,+\infty]$ satisfying $f(x)\geq \varphi(\|x\|)+ \beta$ for any $x\in E$ and for some real number $\beta$ and some nondecreasing function $\varphi: [0,+\infty) \rightarrow [0,+\infty)$ with $\lim_{t\rightarrow +\infty} \varphi(t)=+\infty$, there exists $x_0\in E$ such that $f(x_0)=\inf_{x\in E} f(x)$.
  \item For each unbounded closed subset $A$ of $E$ and each bounded coercive l.s.c function $f: A \rightarrow (-\infty,+\infty)$, there exists $x_0\in A$ such that $f(x_0)=\inf_{x\in A}f(x)$.
  \item For each coercive Lipschitz function $f: E \rightarrow (-\infty,+\infty)$, there exists $x_0\in E$ such that $f(x_0)=\inf_{x\in E} f(x)$.
  \item For each bounded closed nonempty subset $A$ of $E$ and each continuous real--valued function $f$ on $A$, there exists $x_0\in A$ such that $f(x_0)=\inf_{x\in A} f(x)$.
  \item For each nonempty bounded closed subset $A$ of $E$ and each continuous real--valued function $f$ defined on $A$ such that $f$ is bounded below, there exists $x_0\in A$ such that $f(x_0)=\inf_{x\in A} f(x)$.
\end{enumerate}
\end{thm}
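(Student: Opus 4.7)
The plan is to show (1) implies each of (2)--(10) by finite-dimensional compactness, then to show (2) implies each of (3)--(10) by an indicator-function extension trick, and finally to close the cycle with the main implication (10) $\Rightarrow$ (1), which I will prove by contraposition using Riesz's lemma.

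For (1) $\Rightarrow$ (k) with $k=2,\ldots,10$: in a finite-dimensional normed space every bounded closed set is compact and every bounded sequence has a norm-convergent subsequence. Hence (2) follows by extracting a convergent subsequence from the bounded minimizing sequence and invoking the l.s.c property; (3), (9) and (10) are the classical Weierstrass theorem for (respectively) l.s.c and continuous functions on compact sets; and (4)--(8) all reduce to the compact case because coercivity, or the growth condition in (6), or the coercivity hypothesis combined with the Lipschitz property in (8), forces every minimizing sequence to stay in a bounded, hence relatively compact, subset of $A$.

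For (2) $\Rightarrow$ (k) with $k\in\{3,\ldots,10\}$: given the setup with subset $A$ and function $f$, set $\tilde f := f + I_A$, which is proper l.s.c on all of $E$ with $\inf_E \tilde f = \inf_A f$, and any minimizing sequence of $\tilde f$ must eventually lie in $A$. If $A$ is bounded (cases (3), (9), (10), after extending the continuous $f$ from $A$ to $E$) such a minimizing sequence is automatically bounded; otherwise (cases (4), (5), (6), (7), (8)) the coercivity of $f$ or the growth hypothesis forces boundedness. Applying (2) produces some $x_0\in E$ at which $\tilde f$ attains its infimum, necessarily $x_0\in A$ with $f(x_0)=\inf_A f$.

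The main obstacle is (10) $\Rightarrow$ (1), which I prove by contraposition: assuming $E$ is infinite-dimensional, I construct a continuous bounded-below real-valued function on the closed unit ball $B$ whose infimum is not attained. Iterated application of Riesz's lemma produces a sequence $\{x_n\}\subset B$ with $\|x_n-x_m\|\ge 1/2$ for $n\neq m$, so the closed balls $\overline{B}(x_n,1/4)$ are pairwise disjoint. Define the continuous hat functions $\varphi_n(x):=\max\{0,\,1-4\|x-x_n\|\}$, each supported in $\overline{B}(x_n,1/4)$ with $\varphi_n(x_n)=1$, and set
$$ f(x) := -\sum_{n=1}^{\infty}\Bigl(1-\tfrac{1}{n}\Bigr)\varphi_n(x), \qquad x\in B. $$
At each $x\in B$ at most one summand is nonzero, so $f$ is well-defined and locally agrees with a single continuous summand which vanishes on the boundary of its support; hence $f$ is continuous on $B$. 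Moreover $-1<f(x)\le 0$ for every $x\in B$, so $f$ is bounded below, yet $f(x_n)=-(1-1/n)\to -1$, showing $\inf_B f=-1$ is not attained. This contradicts (10), completing the cycle; since the same example is continuous and real-valued, it also yields the alternative main step (9) $\Rightarrow$ (1).
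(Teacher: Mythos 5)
Your construction for $(10)\Rightarrow(1)$ is correct and takes a genuinely different route from the paper's: the paper argues directly, showing that for an arbitrary sequence $\{x_n\}$ in a bounded closed set $A$ the continuous function $f(x)=\sum_{k}d(x,\{x_n:n\ge k\})/(2^kD(A))$ has infimum $0$, so that a minimizer forces a convergent subsequence and hence sequential compactness of $A$; your contrapositive Riesz--lemma construction with disjointly supported hat functions achieves the same end (your $f$ is in fact $8$--Lipschitz, so continuity is not an issue), and it is arguably more concrete. The implications $(1)\Rightarrow(k)$ and $(2)\Rightarrow(k)$ are also essentially sound, modulo the small observation that applying $(2)$ to $f+I_A$ automatically rules out $\inf_A f=-\infty$.

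There is, however, a genuine structural gap: your scheme is a fan, not a cycle, and it never shows that any of statements $(4)$--$(8)$ implies $(1)$. You prove $(1)\Rightarrow(k)$ for all $k$, $(2)\Rightarrow(k)$ for $k\ge 3$, and $(10)\Rightarrow(1)$, $(9)\Rightarrow(1)$. This yields $(1)\Leftrightarrow(2)\Leftrightarrow(9)\Leftrightarrow(10)$, and $(3)\Leftrightarrow(1)$ if one adds the unstated but trivial remark that the functions in $(10)$ form a subclass of those in $(3)$; but $(4)$--$(8)$ concern coercive functions on unbounded sets, none of them trivially implies $(9)$ or $(10)$, and so as written they are only shown to be \emph{consequences} of finite--dimensionality, not equivalent to it. The paper closes this loop via the chain $(4)\Rightarrow(5)\Rightarrow(6)\Rightarrow(7)\Rightarrow(8)\Rightarrow(10)$, where $(8)\Rightarrow(10)$ is the nontrivial step: extend the continuous bounded--below $f$ on the bounded closed set $A$ by $+\infty$ to $\tilde f$, pass to the Pasch--Hausdorff envelope $\tilde f_n$ (the inf--convolution of $\tilde f$ with $n\|\cdot\|$), which is Lipschitz and coercive by Lemma \ref{lemma2.4}(4),(6) and has the same infimum and global minimizers as $\tilde f$ by Proposition \ref{proposition2.5}, and then apply $(8)$. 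You could instead repair the argument in your own style by exhibiting, in any infinite--dimensional space, a coercive Lipschitz function with unattained infimum, e.g. $g(x)=-\sum_{n}(1-\tfrac{1}{n})\varphi_n(x)+\max\{0,\|x\|-1\}$ built from your hat functions, which gives $(8)\Rightarrow(1)$ directly; but some step of this kind is indispensable and is currently missing.
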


\begin{cor}\label{corollary1.10}
For each infinite--dimensional reflexive Banach space $E$, there exists at least one coercive Lipschtiz function $f$ on $E$ such that $f$ is not s.w.l.s.c.
\end{cor}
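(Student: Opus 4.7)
The plan is to derive the corollary directly by combining two of the characterizations already established in the excerpt, namely condition (8) in Theorem \ref{theorem1.8} (for reflexive spaces) and condition (8) in Theorem \ref{theorem1.9} (for finite-dimensional spaces). The point is that both theorems single out the very same class of objects, \emph{coercive Lipschitz real-valued functions on $E$}, but attach different additional hypotheses to the minimizer statement; a contradiction between the two, available only in the infinite-dimensional reflexive setting, yields the desired non-s.w.l.s.c.\ example.

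More concretely, I would first invoke Theorem \ref{theorem1.9}: since $E$ is assumed \emph{infinite}-dimensional, condition (8) of that theorem must fail, so there exists a coercive Lipschitz function $f:E\to(-\infty,+\infty)$ which does \emph{not} attain its infimum on $E$. Next, I would invoke Theorem \ref{theorem1.8}: since $E$ is reflexive, condition (8) of that theorem holds, so every s.w.l.s.c.\ coercive Lipschitz function $h:E\to(-\infty,+\infty)$ \emph{does} attain its infimum on $E$. Comparing these two conclusions, the function $f$ produced above cannot be s.w.l.s.c., for otherwise it would have to attain its infimum by the reflexivity of $E$, contradicting the choice of $f$. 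Thus $f$ is a coercive Lipschitz function on $E$ which fails to be sequentially weakly lower semicontinuous, proving the corollary.

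There is essentially no obstacle: the whole argument is a contrapositive comparison between the two equivalences and uses no new construction. The only care needed is to make sure the two conditions (8) really apply to the identical class of functions (real-valued, coercive, Lipschitz on the whole space $E$), which they do as stated. One could, if desired, also remark that the argument gives a slightly stronger qualitative statement: on every infinite-dimensional reflexive Banach space the class of coercive Lipschitz functions is strictly larger than the class of s.w.l.s.c.\ coercive Lipschitz functions, and the witnesses of this strict inclusion are exactly those coercive Lipschitz functions that fail to attain their infimum.
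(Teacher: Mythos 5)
Your argument is correct and is essentially the paper's own proof, merely recast in contrapositive form: the paper assumes all coercive Lipschitz functions are s.w.l.s.c.\ and derives finite-dimensionality from (8) of Theorem \ref{theorem1.8} followed by (8) $\Rightarrow$ (1) of Theorem \ref{theorem1.9}, while you run the same two implications in the other direction to exhibit the witness directly. Both uses of the theorems are legitimate as stated, so nothing further is needed.
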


\par
For any Banach space $E$, the connection between s.w.l.s.c functions and s.w.l.s.c Lipschtiz functions on $E$ by inf--convolutions will be further discussed in Section \ref{section2} of this paper.

\par
The remainder of this paper is organized as follows. Section \ref{section2} is devoted to some preliminaries used in the proofs of the main results: where we prove that a proper l.s.c coercive convex function on any normed space must be bounded below; further, we also discuss the properties of inf--convolutions of l.s.c and s.w.l.s.c functions. Section \ref{section3} is devoted to the proofs of the main results and ends with some important remarks.

\section{Preliminaries on convex functions and inf--convolutions}\label{section2}
Lemma \ref{lemma2.1} below is known, see \cite[Proposition 1.10]{Bre} and \cite{Eke,Phe}, which shows the nice property of convex functions.

\begin{lem}[{\cite{Bre,Eke,Phe}}]\label{lemma2.1}
Let $(X,\|\cdot\|)$ be a normed space and $f: X \rightarrow (-\infty,+\infty]$ a proper l.s.c convex function, then there exist $x^*\in X^*$ and $\alpha\in \mathbb{R}$ such that $f(x)\geq x^*(x)+ \alpha$ for each $x\in X$.
\end{lem}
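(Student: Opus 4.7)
The plan is to invoke the geometric Hahn--Banach separation theorem on the epigraph of $f$ inside the product space $X\times\mathbb{R}$. Concretely, I would set $\mathrm{epi}(f)=\{(x,r)\in X\times\mathbb{R}:f(x)\le r\}$ and observe that convexity of $f$ makes $\mathrm{epi}(f)$ convex while lower semicontinuity of $f$ makes it closed in the product norm topology; it is also nonempty because $f$ is proper, so there exists $x_0\in X$ with $f(x_0)<+\infty$.

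Next I would pick any real number $r_0$ with $r_0<f(x_0)$, so that $(x_0,r_0)\notin\mathrm{epi}(f)$. Since $\mathrm{epi}(f)$ is a closed convex set and $\{(x_0,r_0)\}$ is a disjoint compact convex set, the strict separation form of Hahn--Banach produces a nonzero continuous linear functional on $X\times\mathbb{R}$, which necessarily has the form $(x,r)\mapsto\varphi(x)+\lambda r$ for some $\varphi\in X^{*}$ and $\lambda\in\mathbb{R}$, together with a constant $c\in\mathbb{R}$ such that
\begin{equation*}
\varphi(x_0)+\lambda r_0 \;<\; c \;\le\; \varphi(x)+\lambda r \quad\text{for every }(x,r)\in\mathrm{epi}(f).
\end{equation*}

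The crucial step is pinning down the sign of $\lambda$. Because $(x_0,r)\in\mathrm{epi}(f)$ for every $r\ge f(x_0)$, letting $r\to+\infty$ in the right-hand inequality forces $\lambda\ge 0$. To exclude $\lambda=0$, I plug in $(x,r)=(x_0,f(x_0))\in\mathrm{epi}(f)$ to get $\varphi(x_0)+\lambda f(x_0)\ge c>\varphi(x_0)+\lambda r_0$, i.e.\ $\lambda\bigl(f(x_0)-r_0\bigr)>0$; since $f(x_0)-r_0>0$ by the choice of $r_0$, this yields $\lambda>0$. This sign issue is the only real obstacle; everything else is bookkeeping.

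Finally, for any $x\in X$ with $f(x)<+\infty$, the pair $(x,f(x))$ lies in $\mathrm{epi}(f)$, so the separation inequality gives $\varphi(x)+\lambda f(x)\ge c$, and dividing by $\lambda>0$ produces
\begin{equation*}
f(x)\;\ge\;-\tfrac{1}{\lambda}\varphi(x)+\tfrac{c}{\lambda}.
\end{equation*}
Setting $x^{*}:=-\varphi/\lambda\in X^{*}$ and $\alpha:=c/\lambda\in\mathbb{R}$ gives the desired affine minorant on $\mathrm{dom}(f)$; for $x$ with $f(x)=+\infty$ the inequality is trivial, so it holds on all of $X$.
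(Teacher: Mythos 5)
Your argument is correct and is exactly the standard proof from the sources the paper cites (the paper itself states Lemma 2.1 without proof, referring to Br\'ezis, Proposition 1.10): strictly separate a point $(x_0,r_0)$ below the epigraph from the closed convex epigraph in $X\times\mathbb{R}$, show the vertical coefficient $\lambda$ is strictly positive, and normalize. All the steps, including the sign analysis for $\lambda$, are handled correctly.
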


\par
Lemma \ref{lemma2.2} below is known for a reflexive Banach space and pointed out in the process of the proof of Proposition \ref{proposition1.1}. In fact, it always holds for any normed space, which will both considerably simplify the proofs of the well known results of l.s.c convex functions and play a crucial role in the proofs of the main results since it helps produce convex Lipschitz coercive functions.

\begin{lem}\label{lemma2.2}
Let $(X,\|\cdot\|)$ be a normed space, $A$ a nonempty closed convex subset of $X$ and $f: A \rightarrow (-\infty,+\infty]$ a proper l.s.c convex function, then $f$ is bounded below if either of the following two conditions is satisfied:
\begin{enumerate}[(1)]
  \item $A$ is bounded.
  \item $A$ is unbounded and $f$ is coercive.
\end{enumerate}
\end{lem}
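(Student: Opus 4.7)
The plan is to deduce Lemma 2.2 directly from Lemma 2.1 by first extending $f$ to the whole space, then obtaining an affine minorant, and finally exploiting either the boundedness of $A$ in case (1) or the coercivity of $f$ in case (2).

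First, I would extend $f$ to a function $\tilde{f}\colon X\to(-\infty,+\infty]$ by setting $\tilde{f}(x)=f(x)$ for $x\in A$ and $\tilde{f}(x)=+\infty$ for $x\in X\setminus A$. Since $A$ is closed and convex, and $f$ is proper, l.s.c.\ and convex on $A$, it is routine that $\tilde{f}$ is proper, l.s.c.\ and convex on all of $X$. Applying Lemma \ref{lemma2.1} to $\tilde{f}$ yields some $x^*\in X^*$ and $\alpha\in\mathbb{R}$ such that $\tilde{f}(x)\ge x^*(x)+\alpha$ for every $x\in X$; in particular,
\begin{equation*}
f(x)\ge x^*(x)+\alpha\ge -\|x^*\|\,\|x\|+\alpha\qquad\text{for every }x\in A.
\end{equation*}

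In case (1), since $A$ is bounded, set $M:=\sup_{x\in A}\|x\|<+\infty$; then $f(x)\ge -\|x^*\|M+\alpha$ for every $x\in A$, so $f$ is bounded below. In case (2), the coercivity of $f$ produces some $R>0$ such that $f(x)\ge 0$ for every $x\in A$ with $\|x\|\ge R$, while the affine minorant gives $f(x)\ge -\|x^*\|R+\alpha$ on the bounded piece $\{x\in A:\|x\|<R\}$. Combining the two estimates, $f(x)\ge \min\{0,\,-\|x^*\|R+\alpha\}$ for every $x\in A$, which is the desired lower bound.

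There is really no serious obstacle here; the only point that deserves attention is the verification that $\tilde{f}$ remains proper, l.s.c.\ and convex, which is exactly where the standing hypotheses that $A$ is closed and convex enter (without them the extension by $+\infty$ could fail l.s.c.\ or convexity, and Lemma \ref{lemma2.1} would no longer apply). Once the affine minorant is in hand, the boundedness and the coercivity arguments are essentially one‑line estimates.
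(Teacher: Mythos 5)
Your proof is correct and follows essentially the same route as the paper: extend $f$ by $+\infty$ to all of $X$ and invoke Lemma \ref{lemma2.1} to obtain an affine minorant. The only (harmless) difference is in case (2), where the paper reduces to case (1) by restricting to the bounded sublevel set $A_1=\{x\in A: f(x)\le f(x_0)\}$, whereas you combine the affine minorant on $\{x\in A:\|x\|<R\}$ directly with the coercivity bound on $\{x\in A:\|x\|\ge R\}$; both arguments are valid.
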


\begin{proof}
Under the condition (2), there exists $x_0\in A$ such that $f(x_0)<+\infty$ since $f$ is proper, then $A_1=\{x\in A: f(x)\leq f(x_0)\}$ is a nonempty bounded closed convex subset of $A$ by the coercivity of $f$ and it is also obvious that $\inf_{x\in A}f(x)=\inf_{x\in A_1} f(x)$. By noticing that $f|_{A_1}: A_1 \rightarrow (-\infty,+\infty)$ is still l.s.c (of course, also proper) we only need to prove the lemma under the condition (1) as follows.
\par
First, we extend $f$ to $\tilde{f}$ as follows:
$$ \tilde{f}(x)=\left\{
\begin{aligned}
&f(x), &\text{if}~ x\in A;  \\
&+\infty, & \text{if}~x\in X\setminus A.
\end{aligned}
\right.
$$
Then $\tilde{f}$ is still a proper l.s.c convex function and $\inf_{x\in A} f(x)= \inf_{x\in X} \tilde{f}(x)$. By Lemma \ref{lemma2.1} there exists $x^*\in X^*$ and $\alpha \in \mathbb{R}$ such that $\tilde{f}(x)\geq x^*(x)+ \alpha$ for any $x\in X$, and hence $f(x)\geq x^*(x)+ \alpha \geq -\|x^*\|\|x\|+ \alpha$ for any $x\in A$. Let $M=\sup\{\|x\|: x\in A\}$, then $f(x)\geq -\|x^*\|M+ \alpha$ for any $x\in A$, so $\inf_{x\in A}f(x)\in (-\infty,+\infty)$.
\end{proof}

\begin{rem}\label{remark2.3}
It is very easy to construct an example to show that Lemma \ref{lemma2.2} is not true for nonconvex functions. The key ingredient in the proof of Lemma \ref{lemma2.2} is the use of Lemma \ref{lemma2.1}, namely there exists an affine function $x^*+ \alpha$ which minorizes $f$, since the fact still holds for an arbitrary proper (not necessarily l.s.c) convex function $f: \mathbb{R}^n \rightarrow (-\infty,+\infty]$, see \cite[Proposition IV.1.2.1]{HL}, Lemma \ref{lemma2.1} also holds in finite dimensional normed spaces when the lower semicontinuity of $f$ is removed.
\end{rem}

\par
The operation of inf--convolutions was originally introduced and studied by Hausdorff, Fenchel, Moreau and Rockafellar, see \cite{Phe,Roc,RW} for the related historical comments. Let $(E,\|\cdot\|)$ be a normed space and $f$ and $g$ be two extended real--valued functions from $E$ to $(-\infty,+\infty]$, the inf--convolution $f\square g$ of $f$ and $g$ is an extended real--valued function from $E$ to $[-\infty,+\infty]$ defined by
$$(f \square g)(x)=\inf\{f(y)+ g(x-y): y\in E\}~ \text{for any}~ x\in E.$$

\par
For any positive number $k$, the Pasch--Hausdorff envelope of $f$ is defined as follows by
$$f_k(x)=(f \square k\|\cdot\|)(x)=\inf\{f(y)+ k\|y-x\|: y\in E\}~\text{for any}~x\in E,$$
and the Yosida--Moreau envelope of $f$ is defined by
$$f_{\langle k \rangle}(x)= (f\square \frac{k}{2}\|\cdot\|^2)(x)= \inf\{f(y)+ \frac{1}{2}k\|y-x\|^2: y\in E\}~\text{for any}~x\in E.$$

\par
When $k$ runs over $\mathbb{N}$, one can obtain the sequence $\{f_n: n\in \mathbb{N}\}$ of inf--convolutions of $f$ with respect to $n\|\cdot\|$. This paper only uses the sequence $\{f_n, n\in \mathbb{N}\}$ and for the sake of convenience we briefly call it the sequence of inf--convolutions of (or by) $f$. The sequence $\{f_n, n\in \mathbb{N}\}$ of inf--convolutions of $f$ provides a classical way of approximating $f$, which has been successfully used in the study of the subdifferentials, approximate subdifferentials, the Clarke subdifferentials, differentiability and extension of Lipschitz functions, see \cite{FP,Hir2,WC,Thi,Hir1}.

\par
Lemma \ref{lemma2.4} below surveys the basic properties of inf--convolutions, most of which are known but with one addition.

\begin{lem}\label{lemma2.4}
Let $(E,\|\cdot\|)$ be a normed space and $f: E \rightarrow (-\infty,+\infty]$ be a proper extended real--valued function. Then the following statements hold:
\begin{enumerate}[(1)]
  \item For each $x\in E$, $\{f_n(x), n\in \mathbb{N}\}$ is a nondecreasing sequence and $f_n(x)\leq f(x)$ for any $n\in \mathbb{N}$ (see \cite{Bre,Phe}).
  \item If $f$ is convex, then $f_k$ is convex for each positive number $k$ (see \cite{Bre,Phe}).
  \item If $f$ is convex or bounded below, then $\{f_n(x), n\in \mathbb{N}\}$ converges to $f(x)$ for each $x\in E$ iff $f$ is l.s.c (see \cite{Bre,Phe,WC}).
  \item If $f$ is bounded below, then $f_k$ is Lipschitz with Lipschitz constant $k$ for each positive number $k$ (see \cite{FP,Phe,WC}).
  \item If $f$ is convex and l.s.c, then $f_n$ is Lipschitz with Lipschitz constant $n$ for sufficiently large $n$ (see \cite{Bre,FP,Phe}).
  \item If $f$ is bounded below and there exists some nonempty bounded subset $H$ of $E$ such that $f(x)=+\infty$ for any $x\in E\setminus H$, then $f_k$ is coercive for each positive number $k$.
\end{enumerate}
\end{lem}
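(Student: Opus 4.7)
The lemma collects six properties of the Pasch--Hausdorff envelope. Items (1)--(5) are already in the cited literature, and my plan for them would simply be to verify the references and, where necessary, reproduce the short standard arguments: (1) by monotonicity (the family over which the infimum is taken grows with $n$, and comparing with the choice $y = x$ gives $f_n(x) \leq f(x)$); (2) by the fact that the inf-convolution of two convex functions is convex (its epigraph is the projection of the Minkowski sum of the epigraphs of $f$ and $k\|\cdot\|$); (3) by combining Lemma \ref{lemma2.1} with a minimizing-sequence argument; and (4), (5) by the classical $k$--Lipschitz estimate on $f_k$ which follows from the triangle inequality applied inside the infimum.

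The only genuinely new statement is (6), and the plan there is a direct lower-bound estimate. Fix $k > 0$. Using the hypothesis, choose $m \in \mathbb{R}$ with $f \geq m$ on $E$ together with $M := \sup\{\|y\| : y \in H\} < +\infty$. Since $f(y) = +\infty$ for every $y \notin H$, the infimum defining $f_k(x)$ is effectively restricted to $y \in H$, and this restricted infimum is finite because $f$ is proper, so $H$ meets the effective domain of $f$. For any $y \in H$ the reverse triangle inequality yields $\|y - x\| \geq \|x\| - \|y\| \geq \|x\| - M$, hence
$$
f_k(x) \;=\; \inf_{y \in H} \bigl( f(y) + k\|y - x\| \bigr) \;\geq\; m + k(\|x\| - M)
$$
whenever $\|x\| \geq M$. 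Letting $\|x\| \to +\infty$ forces $f_k(x) \to +\infty$, which is precisely the coercivity of $f_k$.

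No serious obstacle is anticipated: item (6) is essentially the observation that the penalty term $k\|y - \cdot\|$ transfers the norm growth coming from the distance of $x$ to the bounded set $H$ onto the variable $x$ itself, uniformly in the choice of $y \in H$. The two points I would be careful to articulate in the write-up are the use of properness of $f$ (which guarantees that the restricted infimum over $H$ is finite, so that $f_k$ is not identically $+\infty$) and the precise use of the reverse triangle inequality to convert the uniform bound $\|y\| \leq M$ into the lower bound $\|y - x\| \geq \|x\| - M$ valid for all sufficiently large $\|x\|$.
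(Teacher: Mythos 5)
Your proposal is correct and follows essentially the same route as the paper: items (1)--(5) are dispatched by citation to the standard literature, and for (6) the paper gives the same direct lower bound $f_k(x)\geq m+k\,d(x,H)\geq m+k(\|x\|-M)$, which tends to $+\infty$ because $H$ is bounded. Your write-up merely makes the reverse-triangle-inequality step explicit where the paper phrases it via the distance function $d(\cdot,H)$.
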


\begin{proof}
We only need to prove (6) since the others are known. Let $m=\inf_{x\in E} f(x)$ and $d(x,H)=\inf\{d(x,h): h\in H\}$, then
\begin{equation*}
 \aligned
f_k(x)& =\inf\{f(y)+k\|y-x\|: y\in E\} \\
&=\inf\{f(y)+k\|y-x\|: y\in H\} \\
&\geq m+kd(x,H).
 \endaligned
\end{equation*}
Then $\lim_{\|x\| \rightarrow +\infty} f_k(x)=+\infty$ since $H$ is bounded.
\end{proof}

\par
The Yosida--Moreau envelope $f_{\langle k\rangle}$ of a proper l.s.c function $f$ has been extensively studied since $f_{\langle k\rangle}$ possesses the nice smoothness and the same infimum and global minimizers as $f$, and also since $f_{\langle k\rangle}$ is closely related to the viscosity solutions of Hamilton--Jocobi equations, see \cite{Luc,HL,RW,BW,PB,BT,BPP,LL}. The Pasch--Hausdorff envelope $f_k$ of $f$ is often Lipschitz, which is what is needed in this paper. Besides, Hiriart--Urruty \cite{Hir1} earlier proved that $f_k$ could also retain the infimum and global minimizers of $f$ in the special case that $f$ is Lipschitz with Lipschitz constant $k$ and $f$ is defined on a closed subset of $E$ (see Corollary \ref{corollary2.7} below for his result). Proposition \ref{proposition2.5} below shows that $f_k$ still retains the infimum and global minimizers of $f$ for a general l.s.c function $f$ from $E$ to $(-\infty,+\infty]$. Proposition \ref{proposition2.5} seems known at least in the field of convex analysis (for example, the authors of \cite{BPP} also inexplicitly mentioned this result) but has not been explicitly stated and given a concrete proof, here we would like to give its proof since we want to emphasize that this result can not be generalized to nonconvex functions with value $-\infty$ at some point (see Remark \ref{remark2.6} below).

\begin{prop}\label{proposition2.5}
Let $(E,\|\cdot\|)$ be a normed space, $k$ a positive number, and $f: E \rightarrow (-\infty,+\infty]$ an extended real--valued function. Then the following statements hold:
\begin{enumerate}[(1)]
  \item $\inf_{x\in E}f(x)=\inf_{x\in E} f_k(x)$.
  \item Any global minimizer for $f$ is a global minimizer for $f_k$, and if $f$ is l.s.c any global minimizer for $f_k$ is also a global minimizer for $f$.
\end{enumerate}
\end{prop}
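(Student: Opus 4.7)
For part (1), the plan is to bracket $\inf_E f_k$ from both sides. Taking $y = x$ inside the defining infimum of $f_k(x)$ immediately gives $f_k(x) \le f(x)$ for every $x \in E$, hence $\inf_E f_k \le \inf_E f$. Conversely, for arbitrary $x,y \in E$ we have $f(y)+k\|y-x\| \ge f(y) \ge \inf_E f$ (using $k\|y-x\| \ge 0$); taking the infimum over $y$ yields $f_k(x) \ge \inf_E f$ for every $x$, and then over $x$.

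For the first half of (2), if $x_0$ is a global minimizer of $f$, then combining $f_k(x_0) \le f(x_0) = \inf_E f$ with part (1) gives $f_k(x_0) \le \inf_E f_k$; the reverse inequality is automatic, so $x_0$ minimizes $f_k$. For the converse, assume $f$ is l.s.c. and that $x_0$ is a global minimizer of $f_k$, so $f_k(x_0) = \inf_E f_k = \inf_E f$ by (1); I treat the nontrivial case $\inf_E f \in \mathbb{R}$ (the case $\inf_E f = +\infty$ forces $f \equiv +\infty$ trivially, and $\inf_E f = -\infty$ is incompatible with $x_0$ being a genuine minimizer of $f_k$ since $f$ is real-valued where finite). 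By definition of infimum, choose $\{y_n\} \subset E$ with $f(y_n) + k\|y_n - x_0\| \to f_k(x_0) = \inf_E f$. Since $f(y_n) \ge \inf_E f$ and $k\|y_n - x_0\| \ge 0$, each of these two nonnegative discrepancies must tend to zero separately, which forces $y_n \to x_0$ in norm and $f(y_n) \to \inf_E f$. Applying the lower semicontinuity of $f$ at $x_0$ along this sequence, $f(x_0) \le \liminf_n f(y_n) = \inf_E f$, and the reverse inequality is immediate, so $f(x_0) = \inf_E f$.

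The only delicate step is the converse in (2), which rests squarely on using l.s.c.\ to pass the limit from the infimizing sequence $\{y_n\}$ to the point $x_0$; the separation of the sum $f(y_n)+k\|y_n-x_0\|$ into its two nonnegative pieces is what guarantees $y_n \to x_0$ rather than merely keeping the sum small. This is precisely where the result fails for non-l.s.c.\ functions taking the value $-\infty$ somewhere (as anticipated in Remark~\ref{remark2.6}): the limit of $f(y_n)$ could then jump strictly below $f(x_0)$, breaking the final inequality.
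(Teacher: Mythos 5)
Your proposal is correct and follows essentially the same route as the paper: part (1) by the two elementary inequalities $f_k\le f$ and $f_k(x)\ge\inf_E f$, and the converse in (2) by taking an infimizing sequence $\{y_n\}$ for $f_k(x_0)$, splitting $f(y_n)-\inf_E f$ and $k\|y_n-x_0\|$ as two nonnegative terms whose sum tends to zero, and then invoking lower semicontinuity at $x_0$. Your explicit dismissal of the $\inf_E f=-\infty$ case is the same (tacit) convention the paper uses when it asserts $f_k(x_0)\in\mathbb{R}$ for proper $f$.
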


\begin{proof}
(1) is clear by noticing $f_k\leq f$ and $\inf_{x\in E} f_k(x) \geq \inf_{x\in E}f(x)$.
\par
(2). If $x_0$ is a global minimizer of $f$, then $f_k(x_0)\leq f(x_0)= \inf_{x\in E} f(x)= \inf_{x\in E} f_k(x)$, so $x_0$ is a global minimizer of $f_k$. Conversely, if $f$ is l.s.c and $x_0$ is a global minimizer of $f_k$, then in the case that $f\equiv +\infty$, also $f_k\equiv +\infty$, and hence $x_0$ must be a global minimizer for $f$. When $f$ is proper, $\inf_{x\in E} f(x)=\inf_{x\in E} f_k(x) =f_k(x_0)\in \mathbb{R}$, which means that $f$ is bounded below. Denote $m=\inf_{x\in E} f(x)$, by the definition of $f_k$ there must exist a sequence $\{x_n, n\in \mathbb{N}\}$ such that $\{f(x_n)+ k\|x_n-x_0\|, n\in \mathbb{N}\}$ converges to $m$ in a nonincreasing manner, which forces $\{f(x_n), n\in \mathbb{N}\}$ to tend to $m$ and $\{x_n, n\in \mathbb{N}\}$ to converge to $x_0$, so that $m\leq f(x_0)\leq \underline{\lim}_nf(x_n)= \lim_n f(x_n)= m$, namely $x_0$ is a global minimizer for $f$.
\end{proof}

\begin{rem}\label{remark2.6}
When $f$ is convex and takes the value $-\infty$ at some point, then once $f$ is l.s.c $f$ must be identically equal to $-\infty$ , at which time $f_k\equiv -\infty$, $f$ and $f_k$ possess, of course, the same global minimizers, namely (2) of Proposition \ref{proposition2.5} still holds for convex functions which can take the value $-\infty$. But for nonconvex functions which can take the value $-\infty$, (2) of Proposition \ref{proposition2.5} not necessarily holds, such a counterexample is given as follows:
$$ f(x)=\left\{
\begin{aligned}
&\ln x, &\text{if}~ x\in (0,+\infty);  \\
&-\infty, & \text{if}~x=0; \\
&+\infty, & \text{if}~x\in (-\infty,0).
\end{aligned}
\right.
$$
Then $f: (-\infty,+\infty) \rightarrow [-\infty,+\infty]$ is a l.s.c nonconvex function on $\mathbb{R}$. It is obvious that $f_k\equiv -\infty$ for any given positive number $k$, then any $x_0\neq 0$ is a global minimizer for $f_k$ but not a global minimizer for $f$.
\end{rem}

\begin{cor}[{\cite[Theorem 4(a)]{Hir1}}]\label{corollary2.7}
Let $(E,\|\cdot\|)$ be a normed space, $S$ a nonempty closed subset of $E$, and $f: S\rightarrow R$ a Lipschitz function on $S$ with Lipschitz constant $k>0$. Denote by $\tilde{f}_k$ the Pasch--Hausdorff envelope of $\tilde{f}$, then $x_0\in E$ satisfies $\tilde{f}_k(x_0)= \inf_{x\in E} \tilde{f}_k(x)$ iff $x_0\in S$ and $f(x_0)= \inf_{x\in S} f(x)$, where $\tilde{f}: E \rightarrow (-\infty, +\infty]$ is given by:
$$ \tilde{f}(x)=\left\{
\begin{aligned}
&f(x), &\text{if}~ x\in S;  \\
&+\infty, & \text{if}~x\in E\setminus S.
\end{aligned}
\right.
$$
\end{cor}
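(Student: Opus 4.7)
The plan is to deduce Corollary~\ref{corollary2.7} directly from Proposition~\ref{proposition2.5} applied to the extension $\tilde f$. First I would verify the hypotheses of Proposition~\ref{proposition2.5}: $\tilde f$ is proper since $S\neq\emptyset$ and $f$ is real--valued on $S$, and $\tilde f$ is l.s.c on $E$ because $f$ is continuous on the closed set $S$ (for any sequence $\{x_n\}$ converging to $x_0\in S$, the values $\tilde f(x_n)$ that are finite come from points in $S$ converging to $x_0$, giving $\underline{\lim}_n\tilde f(x_n)\geq f(x_0)=\tilde f(x_0)$; for $x_0\notin S$, the open set $E\setminus S$ is a neighbourhood on which $\tilde f\equiv+\infty$). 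Then Proposition~\ref{proposition2.5} yields $\inf_{x\in E}\tilde f(x)=\inf_{x\in E}\tilde f_k(x)$ together with equality of the respective sets of global minimizers.

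To make sure that ``global minimizer of $\tilde f_k$'' has its usual meaning, I would separately check that $\tilde f_k$ is everywhere finite; this is where the Lipschitz hypothesis on $f$ is essential (note that $\tilde f$ need not be bounded below on $E$, so Lemma~\ref{lemma2.4}(4) is not directly applicable). Fixing any $y_0\in S$, the Lipschitz bound on $f$ gives $f(y)\geq f(y_0)-k\|y-y_0\|$ for every $y\in S$, hence $f(y)+k\|y-x\|\geq f(y_0)-k\|y_0-x\|$ for every $x\in E$. Taking the infimum over $y\in S$ yields $\tilde f_k(x)\geq f(y_0)-k\|y_0-x\|>-\infty$. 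As a by--product, setting $x=y_0$ and comparing with $\tilde f_k(y_0)\leq\tilde f(y_0)=f(y_0)$ shows $\tilde f_k|_S=f$, which is a useful sanity check.

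Finally, I would translate the conclusion of Proposition~\ref{proposition2.5} into the required statement about $f$: since $\tilde f\equiv+\infty$ on $E\setminus S$, we have $\inf_{x\in E}\tilde f(x)=\inf_{x\in S}f(x)$, and a point $x_0\in E$ is a global minimizer of $\tilde f$ if and only if $x_0\in S$ and $f(x_0)=\inf_{x\in S}f(x)$. Combining this with the equivalence of minimizer sets from Proposition~\ref{proposition2.5} gives the desired biconditional. The only step that requires any genuine care is the verification that $\tilde f_k$ is everywhere finite, since Proposition~\ref{proposition2.5} is stated for functions into $(-\infty,+\infty]$ and its converse direction (global minimizer of $f_k$ $\Rightarrow$ global minimizer of $f$) uses the finiteness of $f_k$ at the minimizer; all remaining steps are straightforward unwindings of the definitions of $\tilde f$ and of the Pasch--Hausdorff envelope.
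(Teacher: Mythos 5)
Your proposal is correct and follows essentially the same route as the paper: check that $\tilde f$ is proper and l.s.c (using continuity of $f$ and closedness of $S$), note $\inf_{x\in E}\tilde f=\inf_{x\in S}f$, and invoke Proposition~\ref{proposition2.5}. Your additional verification that $\tilde f_k$ is everywhere finite via the Lipschitz bound $f(y)\geq f(y_0)-k\|y-y_0\|$ is a worthwhile detail that the paper's one-line proof leaves implicit.
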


\begin{proof}
It is clear that $f$ is continuous on $S$, then $\tilde{f}$ must be l.s.c on $E$ since $S$ is closed. By observing that $\inf_{x\in S} f(x)= \inf_{x\in E} \tilde{f}(x)$, the proof immediately follows from Proposition \ref{proposition2.5}.
\end{proof}

\par
It immediately follows from Lemma \ref{lemma2.4} that $f_k$ is Lipschitz and hence preserves the lower semicontinuity of $f$ if $f$ is l.s.c and bounded below. Since the notion of a sequentially weakly lower semicontinuity is stronger than that of a lower semicontinuity, a natural problem is whether $f_k$ can also retain the sequentially weakly lower semicontinuity of $f$ if $f$ is s.w.l.s.c and bounded below. Currently, we only have a partial answer to the problem although we think that the final answer is ``No''. Theorem \ref{theorem2.8} below shows that the answer is Yes when the underlying space is reflexive, whereas the following Remark \ref{remark2.9} shows that the reflexivity is not necessary.

\begin{thm}\label{theorem2.8}
Let $E$ be a reflexive Banach space, then $f_k$ is s.w.l.s.c and Lipschitz for any positive number $k$ and any s.w.l.s.c proper function $f$ bounded below on $E$.
\end{thm}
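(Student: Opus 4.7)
The Lipschitz part is immediate from Lemma \ref{lemma2.4}(4), so the plan is to concentrate on showing that $f_k$ is s.w.l.s.c. Let $\{x_n, n\in \mathbb{N}\}$ be an arbitrary sequence in $E$ with $x_n \rightharpoonup x_0$ weakly, and set $L=\underline{\lim}_n f_k(x_n)$. If $L=+\infty$ there is nothing to prove; otherwise, after passing to a subsequence I may assume $\lim_n f_k(x_n)=L\in \mathbb{R}$. I also know that $L$ is finite from below since $f_k\geq \inf_E f> -\infty$ by the boundedness below of $f$.

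Next, I would exploit an $\varepsilon$-minimizing sequence for each infimum defining $f_k(x_n)$: pick $y_n\in E$ with
\[
f(y_n)+k\|y_n-x_n\| \leq f_k(x_n)+\tfrac{1}{n}.
\]
Writing $m=\inf_E f$, this forces $k\|y_n-x_n\|\leq f_k(x_n)-m+\tfrac{1}{n}$, which is bounded because $\{f_k(x_n)\}$ converges. Since $\{x_n\}$ is weakly convergent, it is norm-bounded, and hence $\{y_n\}$ is norm-bounded as well. Here is the key use of reflexivity: by the Eberlein--Shmulyan theorem $\{y_n\}$ has a weakly convergent subsequence $y_{n_j}\rightharpoonup y_0$, while of course $x_{n_j}\rightharpoonup x_0$ still holds.

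Finally, I would combine the s.w.l.s.c. of $f$ with the weak lower semicontinuity of the norm. Since $y_{n_j}-x_{n_j}\rightharpoonup y_0-x_0$, one has $\|y_0-x_0\|\leq \underline{\lim}_j\|y_{n_j}-x_{n_j}\|$, and the hypothesis gives $\underline{\lim}_j f(y_{n_j})\geq f(y_0)$. Combining everything,
\[
L=\lim_j \bigl(f_k(x_{n_j})+\tfrac{1}{n_j}\bigr)\geq \underline{\lim}_j\bigl[f(y_{n_j})+k\|y_{n_j}-x_{n_j}\|\bigr]\geq f(y_0)+k\|y_0-x_0\|\geq f_k(x_0),
\]
where the second inequality uses the elementary fact $\underline{\lim}(a_j+b_j)\geq \underline{\lim} a_j+\underline{\lim} b_j$. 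This yields the s.w.l.s.c. inequality $\underline{\lim}_n f_k(x_n)\geq f_k(x_0)$ and completes the proof.

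The only delicate point is the two successive subsequence extractions (first to realize $L$ as a true limit, then to use reflexivity on $\{y_n\}$), and ensuring that the boundedness of $\{y_n\}$ really is forced by the bound below on $f$ together with the finiteness of $L$; this is where both hypotheses (reflexivity and $f$ being bounded below) are genuinely needed. Once these points are handled, the remainder is a routine assembly of s.w.l.s.c. of $f$ and weak lower semicontinuity of the norm.
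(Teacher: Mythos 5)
Your proof is correct and follows essentially the same route as the paper's: choose near-minimizers $y_n$ in the definition of $f_k(x_n)$, show they are bounded using the lower bound on $f$ and the boundedness of the weakly convergent sequence $\{x_n\}$, extract a weakly convergent subsequence via Eberlein--Shmulyan, and pass to the limit using the s.w.l.s.c.\ of $f$ and the weak lower semicontinuity of the norm. Your version is in fact slightly more careful than the paper's in first passing to a subsequence realizing $\underline{\lim}_n f_k(x_n)$ before extracting the weakly convergent subsequence of $\{y_n\}$, a step the paper glosses over with ``without loss of generality.''
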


\begin{proof}
It is known that $f_k$ is Lipschitz by Lemma \ref{lemma2.4}, we only need to verify that $f_k$ is s.w.l.s.c.
\par
Now, given any $x_0\in E$, let $\{x_n, n\in \mathbb{N}\}$ be any sequence such that it weakly converges to $x_0$. By the definition of $f_k$ there exists a sequence $\{y_n,n\in \mathbb{N}\}$ in $E$ such that
$$f_k(x_n)> f(y_n)+ k\|y_n-x_n\|- \frac{1}{n} $$ for each $n\geq 1$. {\hfill} (1)
\par
Since $f$ is bounded below, $f_k$ is Lipschitz, so that $\{f_k(x_n), n\in \mathbb{N}\}$ is bounded by the boundedness of $\{x_n, n\in \mathbb{N}\}$, then (1) forces $\{y_n, n\in \mathbb{N}\}$ to be bounded. By the Eberlin--Shmulyan theorem there exists a subsequence of $\{y_n,n \in \mathbb{N}\}$ such that the subsequence weakly converges to some $y_0\in E$, we can assume, without loss of generality, that the subsequence is just $\{y_n, n\in \mathbb{N}\}$, so that $\{y_n-x_n, n\in \mathbb{N}\}$ weakly converges to $y_0-x_0$. Since $f$ and $\|\cdot\|$ are both s.w.l.s.c, we have
\begin{align*}
&\underline{\lim}_n(f(y_n)+ k\|y_n-x_n\|) \\
&\geq f(y_0) + k\|y_0-x_0\| \\
&\geq f_k(x_0). \tag{2}
\end{align*}

\par
Combining (1) with (2) yields that $\underline{\lim}_n f_k(x_n) \geq f_k(x_0)$ .
\end{proof}

\begin{rem}\label{remark2.9}
Let us recall that a Banach space $E$ is a Schur space if any sequence $\{x_n, n\in \mathbb{N}\}$ in $E$ weakly converges to a point $x$ iff $\{x_n,n\in \mathbb{N}\}$ converges to $x$. For example, $l^1$ is a Schur space but not a reflexive Banach space. It is very easy to see that Theorem \ref{theorem2.8} still holds for any Schur space since the sequentially weakly lower semicontinuity and the lower semicontinuity coincide for any extended real--valued functions on a Schur space.
\end{rem}

\section{Proofs of the main results and some important remarks.}\label{section3}

\begin{proof}[Proof of Theorem \ref{theorem1.7}.]
$(1)\Rightarrow (2)$ is known as a special case of Proposition \ref{proposition1.5}.
\par
$(2) \Rightarrow (3)$. First, extend $f: A\rightarrow (-\infty, +\infty]$ to $E$ as follows:
$$ \tilde{f}(x)=\left\{
\begin{aligned}
&f(x), &\text{if}~ x\in A;  \\
&+\infty, & \text{if}~x\in E\setminus A.
\end{aligned}
\right.
$$
Then $\tilde{f}$ is still a proper l.s.c convex function and $\inf_{x\in E} \tilde{f}(x)=\inf_{x\in A}f(x)$. $\tilde{f}$ obviously satisfies (2), and hence there exists $x_0\in E$ such that $\tilde{f}(x_0)=\inf_{x\in E} \tilde{f}(x)= \inf_{x\in A}f(x)$, then $x_0$ must belong to $A$ since $\inf_{x\in A}f(x)< +\infty$, so that $f(x_0)= \tilde{f}(x_0)=\inf_{x\in A}f(x)$.
\par
$(3)\Rightarrow (4)$. Let $x_0\in A$ be any point. Further, let $A_1=\{x\in A~|~f(x)\leq f(x_0)\}$, then $A_1$ is a nonempty bounded closed convex subset of $A$ since $f$ is coercive. It is obvious that $f|_{A_1}$ satisfies (3) and $\inf_{x\in A_1} f|_{A_1}(x)=\inf_{x\in A} f(x)$, there exists $x_0\in A_1$ by (3) such that $f(x_0)=\inf_{x\in A_1} f|_{A_1}(x)=\inf_{x\in A} f(x)$.

\par
$(4)\Rightarrow (5)$ is clear.

\par
$(5)\Rightarrow (6)$ is clear by taking $A=E$ in (5) since a Lipschitz function must be bounded.

\par
$(6)\Rightarrow (7)$. Since $f$ is proper, there exists $x_0\in A$ such that $f(x_0)<+\infty$, further let $A_1=\{x\in A~|~ f(x)\leq f(x_0)\}$, then $A_1$ is a nonempty bounded closed convex subset of $A$ and $\inf_{x\in A_1} f(x)= \inf_{x\in A} f(x)$. Now, define $\hat{f}: E\rightarrow (-\infty, +\infty]$ as follows:
$$ \tilde{f}(x)=\left\{
\begin{aligned}
&f(x), &\text{if}~ x\in A_1;  \\
&+\infty, & \text{if}~x\in E\setminus A_1.
\end{aligned}
\right.
$$
Then $\hat{f}$ is still a proper l.s.c convex function and $\inf_{x\in E} \hat{f}(x)= \inf_{x\in A_1} f(x)$.
\par
Let $\{\hat{f}_n,n\in \mathbb{N}\}$ be the sequence of inf--convolutions of $\hat{f}$, then each $\hat{f}_n$ is a convex, coercive and Lipschitz function by (2), (3), (4) and (6) of Lemma \ref{lemma2.4} and Lemma \ref{lemma2.2}. Now, fix any $n\in \mathbb{N}$, then $\hat{f}_n$ satisfies (6) and thus there exists $x_0\in E$ such that $\hat{f}_n(x_0)= \inf_{x\in E} \hat{f}_n(x)$. Further, it immediately follows from Proposition \ref{proposition2.5} that $\hat{f}(x_0)=\inf_{x\in E} \hat{f}(x)$, in turn $x_0$ must belong to $A_1$ since $\inf_{x\in E} \hat{f}(x)\in \mathbb{R}$ (by Lemma \ref{lemma2.2}), then $f(x_0)=\inf _{x\in A_1} f(x)= \inf_{x\in A} f(x)$.

\par
$(7) \Rightarrow (8)$ is clear since $f$ satisfying (8) must be coercive.

\par
$(8) \Rightarrow (9)$. Define $f=-g+I_B$, then $f$ is a proper, l.s.c and convex function from $E$ to $(-\infty,+\infty]$. Further, choose $\beta \in \mathbb{R}$ for any given nondecreasing function $\varphi: [0, +\infty) \rightarrow [0,+\infty)$ satisfying $\lim_{t\rightarrow +\infty} \varphi(t)=+\infty$ such that $\varphi(1)+ \beta\leq -\|g\|$ (such functions $\varphi$ always exist, for example, take $\varphi(t)= \frac{1}{2}t$ for any $t\geq 0$), then $f(x)= -g(x) \geq -\|g\|\|x\| \geq -\|g\| \geq \varphi(1)+ \beta$ for any $x\in B$, and hence $f(x) \geq \varphi(\|x\|)+ \beta$ for any $x\in E$. By (8) there exists $x_0\in E$ such that $f(x_0)=\inf_{x\in E} f(x)= -\sup_{x\in B} g(x)$, then $x_0$ must belong to $B$, and hence $g(x_0)=\|g\|$.

\par
$(9) \Rightarrow (1)$ is obvious by the James theorem.
\end{proof}

\par
Theorem \ref{theorem1.7} only involves convex functions and their lower semicontinuity and it is well known that the lower semicontinuity of a convex function $f$ is lifted to the Lipschitz property of the Pasch--Hausdorff envelope $f_k$ of $f$, we can employ the regularization skill by inf--convolutions in the proof of $(6) \Rightarrow (7)$ of Theorem \ref{theorem1.7}. But Theorem \ref{theorem1.8} involves nonconvex functions and their sequentially weakly lower semicontinuity. Since we do not know if the regularization process by inf--convolutions can retain the sequentially weakly lower semicontinuity of a s.w.l.s.c nonconvex function defined on a general Banach space, we can not use the regularization skill by inf--convolutions to deduces (6) from (5) so that we are forced to prove Theorem \ref{theorem1.8} by completing the two circles: $(1)\Rightarrow (2)\Rightarrow (3)\Rightarrow (4)\Rightarrow (5)\Rightarrow (7)\Rightarrow (8)\Rightarrow (9)\Rightarrow (1)$ and $(1)\Rightarrow (2)\Rightarrow (3)\Rightarrow (4)\Rightarrow (6)\Rightarrow (7)\Rightarrow (8)\Rightarrow (9)\Rightarrow (1)$, we eventually complete the proof of $(5)\Leftrightarrow (6)$. Fortunately, we can use the regularization skill by inf--convolutions in the proof of $(8) \Rightarrow (9)$ of Theorem \ref{theorem1.8} since $f=: -g+ I_B$ is a l.s.c convex function for any $g\in E^*$ and each $f_n$ is convex so that $f_n$ is also s.w.l.s.c.

\begin{proof}[Proof of Theorem \ref{theorem1.8}.]
 $(1) \Rightarrow (2)$ is just Proposition \ref{proposition1.5}.
\par
$(2)\Rightarrow (3)\Rightarrow (4)$ are completely similar to the proofs of $(2)\Rightarrow (3)\Rightarrow (4)$ of Theorem \ref{theorem1.7}, so are omitted.

\par
Both $(4)\Rightarrow (5)$ and $(4)\Rightarrow (6)$ are obvious.

\par
$(5)\Rightarrow (7)$ is also clear.

\par
$(6) \Rightarrow (7)$. Let $A$ and $f$ be assumed as in (7), further arbitrarily choose $x_0\in A$ and let $A_1=\{x\in A~|~f(x)\leq f(x_0)\}$, then $A_1$ is a nonempty bounded and sequentially weakly closed subset of $A$ by the sequentially weakly lower semicontinuity and coercivity of $f$. Now, let $m=\inf\{f(x)~|~x\in A_1\}$, then $m\in R$ since $f$ is bounded. Further, choose $\beta\in \mathbb{R}$ for any given $\varphi$ satisfying (6) such that $\varphi(h)+ \beta \leq m$, where $h=\sup\{\|x\|~|~x\in A_1\}$, so that $f(x)\geq m \geq \varphi(h)+ \beta\geq \varphi(\|x\|)+ \beta$ for any $x\in A_1$.

\par
Let us define the extension $\tilde{f}$ of $f$ as follows:
$$ \tilde{f}(x)=\left\{
\begin{aligned}
&f(x), &\text{if}~ x\in A_1;  \\
&+\infty, & \text{if}~x\in E\setminus A_1.
\end{aligned}
\right.
$$
Then $\tilde{f}$ is s.w.l.s.c and $\tilde{f}(x)\geq \varphi(\|x\|)+ \beta$ for any $x\in E$, by (6) there exists $x_0\in E$ such that $\tilde{f}(x_0)=\inf _{x\in E} \tilde{f}(x)= \inf_{x\in A_1} f(x)=m$, which means that $x_0\in A_1$ and hence $f(x_0)=\inf_{x\in A_1} f(x)$. Besides, it is obvious that $\inf_{x\in A_1} f(x) =\inf_{x\in A} f(x)$, so $f(x_0)= \inf_{x\in A} f(x)$.

\par
$(7) \Rightarrow (8)$ is obvious since a Lipschitz function is bounded.

\par
$(8) \Rightarrow (9)$. Let $g\in E^*$ and define $f: E\rightarrow (-\infty,+\infty]$ by $f=-g+I_B$, where $B=\{x\in E~|~\|x\|\leq 1\}$, then $f$ is proper convex and l.s.c. Further, let $\{f_n, n\in \mathbb{N}\}$ be the sequence of inf--convolutions of $f$, then each $f_n$ is convex, coercive and Lipschitz since $f$ is convex and bounded below and since $f$ also satisfies (6) of Lemma \ref{lemma2.4}, so that $f_n$ is, of course, s.w.l.s.c. Now, fix any $n$ in $\mathbb{N}$, then there exists $x_0\in E$ such that $f_n(x_0)= \inf_{x\in E} f_n(x)$ by (8). Thus we have $f(x_0)= \inf_{x\in E} f(x)$ again by Proposition \ref{proposition2.5}, which forces $x_0\in B$ and hence $g(x_0)=\|g\|.$

\par
$(9) \Rightarrow (1)$ is clear again by the James theorem.
\end{proof}

\par
Now, we come to the proof of Theorem \ref{theorem1.9}, whose key part is the proof that (10) implies (1).

\begin{proof}[Proof of Theorem \ref{theorem1.9}.]
Similarly to the proof of Theorem \ref{theorem1.8}, one can see $(1)\Rightarrow (2)\Rightarrow (3)\Rightarrow (4)\Rightarrow (5)\Leftrightarrow (6)\Rightarrow (7)\Rightarrow (8)$. Now, we prove that $(8)\Rightarrow (10)\Rightarrow (1)$ and that $(1)\Rightarrow (9)\Rightarrow (10)\Rightarrow (1)$ as follows.

\par
$(8)\Rightarrow (10)$. Let $A$ be a nonempty bounded closed subset of $E$ and $f: A\rightarrow \mathbb{R}$ be continuous and bounded below, define $\tilde{f}: E\rightarrow (-\infty,+\infty]$ as follows:
$$ \tilde{f}(x)=\left\{
\begin{aligned}
&f(x), &\text{if}~ x\in A;  \\
&+\infty, & \text{if}~x\in E\setminus A.
\end{aligned}
\right.
$$
Then $\tilde{f}$ is bounded below and l.s.c.

\par
Consider the sequence $\{\tilde{f}_n, n\in \mathbb{N}\}$ of inf--convolutions of $\tilde{f}$, then each $\tilde{f}_n$ is coercive and Lipschitz by (4) and (6) of Lemma \ref{lemma2.4}. For a fixed $n\in \mathbb{N}$, by (8) there exists $x_0\in E$ such that $\tilde{f}_n(x_0)=\inf_{x\in E} \tilde{f}_n(x)$, further by Proposition \ref{proposition2.5} one has $\tilde{f}(x_0)= \inf_{x\in E} \tilde{f}(x)$, which in turn implies $x_0\in A$ and $f(x_0)=\inf_{x\in A} f(x)$.

\par
$(10) \Rightarrow (1)$. Let $A$ be a nonempty bounded closed subset of $E$ and $\{x_n, n\in \mathbb{N}\}$ be any sequence in $A$. For any given $k\in \mathbb{N}$, define $f_k: A \rightarrow [0,+\infty)$ as follows:
$$f_k(x)=\inf\{\|x_n-x\|: n\geq k\} ~\text{for any }x\in A.$$
\par
Further, let $D(A)$ denote the diameter of $A$ and define $f: A\rightarrow [0,+\infty)$ as follows:
$$f(x)=\sum_{k=1}^{\infty} \frac{f_k(x)}{2^k D(A)}~\text{for any }x\in A.$$
\par
Since $f_k(x)=d(x,X_k)$, where $X_k=\{x_n, n\geq k\}$, each $f_k$ is continuous, which also implies that $f$ is continuous since the series defining $f$ converges uniformly on $A$. Next, we prove $\inf_{x\in A} f(x)=0$ as follows.
\par
For any given positive number $\varepsilon$, choose a sufficiently large $k_0$ such that $\sum_{k=k_0}^{\infty}\frac{1}{2^k}< \varepsilon$, then it is very easy to see that $f(x_{k_0})< \varepsilon$ by observing $f_k(x_{k_0})=0$ for each $k$ with $1\leq k\leq k_0-1$ and $\frac{f_k(x)}{2^k D(A)} \leq \frac{1}{2^k}$ for each $k\in \mathbb{N}$ and $x\in A$. Thus $\inf_{x\in A} f(x)< \varepsilon$ for any $\varepsilon>0$, namely $\inf_{x\in A} f(x)=0$.
\par
Now, by (10) there exists $x_0\in A$ such that $f(x_0)=0$, which must imply that $f_k(x_0)=0$ for each $k\in \mathbb{N}$. Then, by the definition of $f_k$ there must exist a subsequence $\{x_{n_k}, k\in \mathbb{N}\}$ of $\{x_n, n\in \mathbb{N}\}$ such that $\lim_{k\rightarrow \infty} \|x_{n_k}-x_0\|=0$, so that $A$ is sequentially compact. Finally, by the Riesz lemma $E$ must be finite--dimensional.
\par
$(1)\Rightarrow (9)$ is the classical Weierstrass theorem.

\par
$(9)\Rightarrow (10)$ is clear.

\par
$(10)\Rightarrow (1)$ has been proved as above.
\end{proof}

\par
Finally, we prove Corollary \ref{corollary1.10}.

\begin{proof}[Proof of Corollary \ref{corollary1.10}.]
Let $E$ be an infinite--dimensional reflexive Banach space. If each coercive Lipschitz function $f: E\rightarrow \mathbb{R}$ is s.w.l.s.c, then $f$ attains its minimum by Theorem \ref{theorem1.8}, which implies that $E$ must be finite--dimensional by Theorem \ref{theorem1.9}, so that a contradiction will be produced.
\end{proof}

\par
Finally, let us end this paper with the following important remarks.

\begin{rem}\label{remark3.1}
Since a normed or Banach space over the field $\mathbb{C}$ of complex numbers can be also regarded as one over $\mathbb{R}$, all the results of this paper still hold for the corresponding complex spaces, as pointed out by Br\'{e}zis in \cite[Proposition 11.22, p.361]{Bre}, only by replacing $g$ with $Re(g)$ when a bounded linear functional $g$ is involved, where $Re(g)$ stands for the usual real part of $g$.
\end{rem}

\begin{rem}\label{remark3.2}
If $\varphi$ in (8) of Theorem \ref{theorem1.7}, (6) of Theorem \ref{theorem1.8} and (6) of Theorem \ref{theorem1.9} is fixed, respectively, then the three theorems still hold, since fixing $\varphi$ does not have any effect on our proofs of these theorems although the corresponding class of functions will be restricted. For example, we can change (8) of Theorem \ref{theorem1.7} as follows: let $\varphi$ be a nondecreasing function from $[0,+\infty)$ to $[0,+\infty)$ satisfying $\lim_{t\rightarrow +\infty} \varphi(t)= +\infty$, then for each proper l.s.c convex function $f: E \rightarrow (-\infty,+\infty]$ satisfying $f(x)\geq \varphi(\|x\|)+ \beta$ for any $x\in E$ and for some $\beta\in \mathbb{R}$ there exists $x_0\in E$ such that $f(x_0)=\inf_{x\in E} f(x)$.
\end{rem}

\begin{rem}\label{remark3.3}
The converse of the classical Weierstrass theorem, namely (9) $\Rightarrow$ (1) of Theorem \ref{theorem1.9}, only holds within the scope of normed spaces by means of the famous Riesz lemma. But the converse does not hold for metrizable linear topological spaces, for example, let $\Omega$ be a nonempty open subset of the complex plane and $H(\Omega)$ the complete metrizable locally convex space of holomorphic functions in $\Omega$, endowed with the topology of uniform convergence on compact subsets of $\Omega$. It is well known from \cite[p.34]{Rudin} that each bounded closed subset of $H(\Omega)$ is compact, and hence (9) of Theorem \ref{theorem1.9} holds but $H(\Omega)$ is infinite dimensional, which is because $H(\Omega)$ is not normable!
\end{rem}
%
%
%
%
%
%

\bibliographystyle{amsplain}

\end{document}